\DeclareSymbolFont{cyrletters}{OT2}{wncyr}{m}{n}
\DeclareMathSymbol{\Sha}{\mathalpha}{cyrletters}{"58}
\newtheorem{theorem}{Theorem}[section]
\newtheorem{lemma}[theorem]{Lemma}
\newtheorem{corollary}[theorem]{Corollary}
\newtheorem{conjecture}[theorem]{Conjecture}
\theoremstyle{definition}
\newtheorem*{ack}{Acknowledgements}
\newtheorem*{con}{Conventions}
\newtheorem{remark}[theorem]{Remark}
\numberwithin{equation}{section} \numberwithin{figure}{section}
\DeclareMathOperator{\Pic}{Pic}
\DeclareMathOperator{\Spec}{Spec}
\newcommand{\Qbar}{\overline{\QQ}}
\newcommand\PP{\mathbb{P}}
\newcommand\ZZ{\mathbb{Z}}
\newcommand\QQ{\mathbb{Q}}
\newcommand\CC{\mathbb{C}}
\newcommand\OO{\mathcal{O}}
\newcommand{\et}{\textrm{\'{e}t}}
\title[Effective Shafarevich for quartic curves]{Effectively computing integral points on the moduli of smooth quartic curves}
\author{Ariyan Javanpeykar}
\address{Ariyan Javanpeykar \\
Institut f\"{u}r Mathematik\\
Johannes Gutenberg-Universit\"{a}t Mainz\\
Staudingerweg 9, 55099 Mainz\\
Germany.}
\email{peykar@uni-mainz.de}
\subjclass[2010]
{11G35  
(14J45, 
14K30,  
14C34,  
14G40,  
14D23)}  
\keywords{  Shafarevich conjecture, Mordell conjecture, rational points, hyperelliptic locus, effectivity, quartic curves, del Pezzo surfaces}
\begin{document}
 
\begin{abstract}
We prove an effective version of the Shafarevich conjecture (as proven by Faltings) for smooth quartic curves in $\mathbb P^2$. To do so, we  establish an effective version of Scholl's finiteness result for smooth del Pezzo surfaces of degree at most four. 
\end{abstract}

\maketitle

\thispagestyle{empty}

\section{Introduction}
We show that the set of integral points on the moduli of smooth quartic hypersurfaces in $\mathbb P^2$ is finite and effectively computable.
 
In \cite{Faltings2} Faltings proved Shafarevich's conjecture for smooth proper curves: for a number field $K$, a finite set of finite places $S$ of $K$, and an integer $g\geq 2$, the set of $K$-isomorphism classes of smooth proper genus $g$ curves over $\mathcal{O}_{K,S}$ is finite. In other words, Faltings established that the set of $\mathcal{O}_{K,S}$-points of the stack of smooth proper genus $g$ curves $\mathcal M_{g}$  is finite. 

Faltings's aforementioned finiteness result for smooth proper curves over $\mathcal{O}_{K,S}$ of fixed genus is not known to be effective. That is, there is currently no algorithm that, on input given a number field $K$ and a finite set of finite places $S$ of $K$,  computes as output the finite set of $\mathcal{O}_{K,S}$-points of the stack $\mathcal M_g$. 

An effective resolution of   Shafarevich's conjecture would have deep consequences in Diophantine geometry as, for example, an effective resolution of Shafarevich's conjecture for smooth proper curves would imply an algorithmic version of the Mordell conjecture; see Section \ref{section:final} for a discussion (cf. \cite{Remo, Szpiro3, Szpirob}).

In this paper we investigate the open substack $\mathcal C_{(4;1)}$ of smooth quartic curves in the stack of smooth proper genus $3$ curves $\mathcal M_3$. More precisely, our main result says that the finite set of integral points on the open substack $\mathcal C_{(4;1)}$ of non-hyperelliptic curves of $\mathcal M_3$ can be effectively computed (see Theorem \ref{thm:quartics}). A more down-to-earth version of our result reads as follows.

\begin{theorem}\label{thm:quartics_intro}
	Let $K$ be a number field and let $S$ be a finite set of finite places of $K$. Then the set of $\mathcal{O}_{K,S}$-isomorphism classes of smooth quartic hypersurfaces in $\mathbb P^2_{\mathcal{O}_{K,S}}$ is finite and effectively computable.
\end{theorem}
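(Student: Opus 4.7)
The plan is to reduce the statement for smooth plane quartic curves to the analogous statement for smooth del Pezzo surfaces of degree two, where the paper's effective version of Scholl's finiteness theorem is available. The bridge is the classical bijection between smooth plane quartics $C \subset \PP^2$ and smooth degree-two del Pezzo surfaces $X$: the anticanonical morphism of such an $X$ exhibits it as a double cover $X \to \PP^2$ ramified along a smooth quartic, and conversely every smooth plane quartic arises in this way. Since a non-hyperelliptic genus-three curve has an essentially unique canonical embedding, the set of $\OO_{K,S}$-isomorphism classes of smooth quartic hypersurfaces coincides with the set of isomorphism classes of the underlying abstract curves, which is what allows us to pass information freely between quartics and del Pezzo surfaces.

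The first step is to globalize this correspondence over the ring of integers. Given a smooth quartic $C \subset \PP^2_{\OO_{K,S}}$, I would enlarge $S$ to a finite set $S' \supseteq S$ containing every place above $2$ and form the double cover of $\PP^2_{\OO_{K,S'}}$ branched along $C$; this yields a smooth degree-two del Pezzo surface over $\OO_{K,S'}$. Conversely, starting from such a del Pezzo surface $X/\OO_{K,S'}$, the anticanonical morphism recovers the quartic as the branch locus, and this recovery is effective: one computes $H^0(X, -K_X)$, writes down the induced morphism to $\PP^2$, and reads off its discriminant. I would then invoke the paper's effective Scholl theorem (for degree at most four, specialized to $d = 2$) to obtain effectively a finite list representing every isomorphism class of smooth degree-two del Pezzo surface over $\OO_{K,S'}$, and thereby a finite, effectively computable list of smooth quartic curves in $\PP^2_{\OO_{K,S'}}$ that necessarily contains every $\OO_{K,S}$-class we are after.

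The final step, which I expect to be the most delicate, is the descent from $\OO_{K,S'}$ down to $\OO_{K,S}$. Since $S' \setminus S$ is finite, each $\OO_{K,S'}$-isomorphism class can split into only finitely many $\OO_{K,S}$-classes, controlled by a computable Galois-cohomological twisting datum; sifting through these twists while checking which actually extend to a smooth quartic model over $\OO_{K,S}$ yields the final effective list. The hardest part will be making the double-cover construction work effectively at primes above $2$ and verifying that the twisting-and-descent step can genuinely be carried out algorithmically rather than merely in principle; this is precisely what forces the detour through $\OO_{K,S'}$ instead of a direct argument over $\OO_{K,S}$.
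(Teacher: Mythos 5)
Your approach — pass to the degree-two del Pezzo surface via the anticanonical double cover after inverting $2$, then invoke the effective Scholl theorem (Theorem \ref{thm:DPintro}) and descend back — is essentially the paper's strategy. In fact, because you argue directly with honest quartics in $\PP^2_{\OO_{K,S}}$ rather than with the stack $\mathcal C_{(4;1)}$, you sidestep the extra bookkeeping the paper's Theorem \ref{thm:quartics} needs (trivializing $\mathrm{H}^1_{\et}(\OO_{K,S},\GL_3)$ and $\Pic(\OO_{K,S'})$ to rule out quartics living in twisted projective bundles); you only need $S'$ to contain the places above $2$. Two small cautions. First, not every smooth degree-two del Pezzo surface over $\OO_{K,S'}$ in the Theorem \ref{thm:DPintro} list is a double cover of the \emph{trivial} $\PP^2_{\OO_{K,S'}}$, so ``thereby a list of quartics'' requires discarding the members whose anticanonical target is a nontrivial Brauer--Severi scheme; this is harmless because the surfaces you produced from honest quartics do cover the trivial $\PP^2$. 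Second, and more importantly, your description of the final descent is off: $\OO_{K,S}$ and $\OO_{K,S'}$ have the same fraction field $K$, so there is no Galois extension and no cohomological twisting in this step, and an $\OO_{K,S'}$-isomorphism class does not ``split into finitely many $\OO_{K,S}$-classes.'' By the finiteness and properness of the Isom scheme for smooth genus-$3$ curves, the map from $\OO_{K,S}$-isomorphism classes to $K$-isomorphism classes (hence to $\OO_{K,S'}$-isomorphism classes) is \emph{injective}; the only task is to decide algorithmically, for each quartic on the $\OO_{K,S'}$-list, whether a smooth quartic model over $\OO_{K,S}$ exists, which is a good-reduction check at the finitely many places of $S'\setminus S$. (A genuine Galois-cohomological step does appear in the paper, but elsewhere: it is used in proving Theorem \ref{thm:DPintro} to pass from $K_{\mathrm{split}}$ down to $K$, and in Theorem \ref{thm:quartics} to pass from a $\overline{K}$-isomorphism class to a finite list of $K$-objects. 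You inherit this through your use of Theorem \ref{thm:DPintro}, so you do not need to redo it, but it belongs there and not in the $\OO_{K,S'}\to\OO_{K,S}$ descent.) With that correction, the proposal is sound and matches the paper's route.
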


Note that the analogous   statement for smooth cubic hypersurfaces in $\mathbb P^2_{\mathcal{O}_{K,S}}$ follows from effective versions of Shafarevich's finiteness theorem for elliptic curves \cite{Coates, Fuchs, Shaf1962,Silverman}.  Moreover,
 effective versions of Shafarevich's conjecture have also been obtained for cyclic curves of prime degree   \cite{dJRe, JvK, Kanel}.  The effectivity of our finiteness result   (Theorem \ref{thm:quartics_intro}) follows, as in the work of Fuchs-von K\"anel-W\"ustholz \cite{Fuchs}, von K\"anel \cite{Kanel}, and de Jong-R\'emond \cite{dJRe} from the effective resolution of the $S$-unit equation in $K$. The latter was achieved by Gy\H{o}ry-Yu   \cite{GyoryYu} (see also  \cite{EvertseGyory}) and relies on the theory of linear forms in logarithms \cite{BakerWustholz1, BakerWustholz2}.


The geometric idea behind our proof of Theorem \ref{thm:quartics_intro} is quite simple. Indeed, 
a smooth quartic curve in $\mathbb P^2$ induces a smooth del Pezzo surface of degree two by taking a double covering of $\mathbb P^2$ ramified precisely along the quartic. Moreover, the isomorphism class of the obtained smooth del Pezzo surface determines the isomorphism class of the corresponding quartic curve. This construction is a special case of what is  sometimes called the cyclic covering trick. We show that it  reduces Theorem \ref{thm:quartics_intro} to a finiteness statement about del Pezzo surfaces. 

In \cite{Scholl} Scholl proved the finiteness of all smooth del Pezzo surfaces  over a number field  $K$  with good reduction outside a fixed set of finite places $S$ of $K$. In particular, the set of $\OO_{K,S}$-isomorphism classes of smooth del Pezzo surfaces over $\OO_{K,S}$ of degree at most four is finite. To prove Theorem \ref{thm:quartics_intro}, we establish an effective version of the  latter finiteness statement.

\begin{theorem}\label{thm:DPintro}
	Let $K$ be a number field and let $S$ be a finite set of finite places.   Then the set of $\mathcal{O}_{K,S}$-isomorphism classes of smooth del Pezzo surfaces over $\mathcal{O}_{K,S}$ of degree at most $4$  is finite and effectively computable.
\end{theorem}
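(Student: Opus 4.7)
The plan is to treat each degree $d \in \{1,2,3,4\}$ separately by means of the classical (pluri)anticanonical description of a smooth del Pezzo surface $X$ of degree $d$. Over $\mathcal{O}_{K,S}$ such an $X$ admits an integral projective model: for $d=4$ a smooth intersection of two quadrics in $\mathbb{P}^4$, for $d=3$ a smooth cubic hypersurface in $\mathbb{P}^3$, for $d=2$ a double cover of $\mathbb{P}^2$ ramified along a smooth plane quartic, and for $d=1$ a double cover of a quadric cone in $\mathbb{P}^3$ ramified along a smooth sextic in the cone. In each case the resulting defining form $f$ has a well-defined discriminant, which is forced to be an $S$-unit by the good reduction hypothesis, and the $\mathcal{O}_{K,S}$-isomorphism class of $X$ is determined by the orbit of $f$ under the natural $\mathrm{GL}$-action together with finitely many effectively computable twists.

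The theorem thereby reduces to an effective Birch--Merriman-type finiteness statement for $\mathrm{GL}(\mathcal{O}_{K,S})$-orbits of forms of $S$-unit discriminant in four specific shapes: a binary quintic (namely the discriminant $\det(sA+tB)$ of the pencil of quadrics for $d=4$), a quaternary cubic (for $d=3$), a ternary quartic (for $d=2$), and a ternary sextic on a cone (for $d=1$). For the binary quintic this is effective by work of Evertse--Gy\H{o}ry \cite{EvertseGyory}, who reduce the problem via the cross-ratios of the roots to the effective $S$-unit equation of Gy\H{o}ry--Yu \cite{GyoryYu}. For the three remaining cases I would follow the same strategy after reducing $f$ to an auxiliary binary form of $S$-unit discriminant, for instance by cutting $\{f=0\}$ with a sufficiently generic pencil of hyperplanes, or, for $d=3$, by contracting a line on $X$ to produce a conic bundle $X\to\mathbb{P}^1$ whose discriminant is already a binary quintic. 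A modest amount of bookkeeping is needed throughout to pass between orbits over $K$ and those over finite extensions corresponding to Galois orbits of the auxiliary geometric data (lines, branch points, singular quadrics, etc.).

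The step I expect to be the main obstacle is the reconstruction, in the $d=4$ case, of the surface $X$ from the orbit of its discriminant quintic, since two distinct intersections of quadrics can share the same discriminant up to a quadratic twist: enumerating these twists effectively amounts to controlling a class in a $2$-torsion Brauer group of $\mathcal{O}_{K,S}$, which requires genuinely $S$-arithmetic input beyond the moduli data. The analogous step in the remaining three cases is more direct, as the double cover there is recovered from its branch locus by an explicit square-root construction in the Picard group of the base.
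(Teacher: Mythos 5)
Your approach is genuinely different from the paper's. The paper does not work with (pluri)anticanonical models or discriminants of defining forms at all. Instead it follows Scholl: it passes to the extension $L/K$ of degree at most $l_d = |W(E_{9-d})|$ over which all lines of $X$ become rational (this extension is unramified outside $S$), enlarges $S_L$ to a set $S_L'$ making $\mathcal{O}_{L,S_L'}$ a PID, and then invokes the fact (Lemma \ref{lem:good_model}, due to Scholl) that a split del Pezzo of degree $\leq 7$ over a PID is the blow-up of $\mathbb{P}^2$ at $9-d$ integral points in general position. Normalizing four of them to the standard frame, the coordinates $a_i, b_i$ of the remaining points are forced to be $S_L'$-units with $1-a_i$, $1-b_i$ also $S_L'$-units (by general position), so they are solutions of the $S$-unit equation, and Győry--Yu then bounds their heights. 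Descent back to $K$ is a Galois cohomology argument using finiteness of the automorphism group when $d \leq 5$. The whole thing stays inside $\mathbb{P}^2$-geometry and never touches forms, orbits, or discriminants.

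Your proposal as written has genuine gaps. First, the claim that a del Pezzo of degree $d$ over $\mathcal{O}_{K,S}$ \emph{admits} the stated hypersurface/double-cover model over $\mathcal{O}_{K,S}$ is not automatic: if $\mathcal{O}_{K,S}$ is not a PID, $H^0(X, \omega^{-m})$ need not be a free module, and the (pluri)anticanonical morphism lands in a possibly nontrivial projective bundle; you gesture at ``finitely many effectively computable twists,'' but this is precisely the point where one must enlarge $S$ to kill the class group and a $\mathrm{GL}$-cohomology class, and it needs to be done explicitly and effectively. Second, and more seriously, the reduction from forms of $S$-unit discriminant in four specific shapes (ternary quartic, ternary sextic on a cone, quaternary cubic, binary quintic) to an application of Evertse--Győry is not carried out for $d=1,2,3$: ``cutting with a sufficiently generic pencil of hyperplanes'' produces an auxiliary binary form whose own discriminant is not visibly controlled by the $S$-unit condition on the original form, and ``sufficiently generic'' is not an effective choice. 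The conic-bundle idea for $d=3$ is more promising but requires the contracted line to be defined over $\mathcal{O}_{K,S}$, which again costs a controlled extension and a splitting argument that must be spelled out. Third, you yourself flag the reconstruction problem for $d=4$ (passing from the discriminant binary quintic back to the pencil of quadrics up to twist, i.e.\ controlling a $2$-torsion Brauer/torsor class) as an obstacle; this is indeed a nontrivial arithmetic issue and is simply left open in the proposal. The paper's blow-up route avoids all of these difficulties at once, which is exactly why Scholl's description is the right lever here; your Birch--Merriman-style strategy is plausible in outline but would require a substantial amount of additional work in each degree, and as it stands it does not constitute a proof.
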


 

In Section \ref{section:final}  we discuss   applications of Theorem \ref{thm:quartics_intro} to an effective version of the Mordell conjecture. 
We emphasize that the results we obtain in Section \ref{section:final} form a first step towards an effective version of the Mordell conjecture, as we  prove an effective version of the Mordell conjecture for some class of complete curves, \emph{assuming}  there is an algorithm that, on input a number field $K$ and a finite set of finite places $S$ of $K$, computes as output a finite set of finite places $S'$ of $K$ containing $S$ such that all   smooth quartic curves $C$ in $\mathbb P^2_K$ with a smooth proper model  over $\OO_{K,S}$ have non-hyperelliptic reduction outside $S'$ (see Corollary \ref{cor:eff_mordell} for a precise statement). 

\begin{ack} We thank Rafael von K\"anel for inspiring discussions on the effective Shafarevich conjecture.  We are most grateful to David Holmes for many useful comments and remarks. We thank Olivier Benoist, Yuri Bilu, J\'er\'emy Blanc, Jean-Beno\^it Bost, Brian Conrad, Bas Edixhoven,  Jan-Hendrik Evertse,  Robin de Jong, Daniel Litt, Daniel Loughran, Jan-Steffen M\"uller, Bjorn Poonen,  Duco van Straten, Ronan Terpereau,  Fabio Tonini,  Angelo Vistoli, and Jonathan Wise.  We gratefully acknowledge support of SFB/Transregio 45.
\end{ack}

\begin{con}
	For $S$ a scheme, a  curve over $S$ is a flat proper finitely presented morphism $X\to S$ whose geometric fibres are connected schemes of dimension one.
We denote by $\lvert S\rvert$ the cardinality of an arbitrary finite set $S$. By $\log$ we mean the principal value of the natural logarithm. We define the product taken over the empty set as $1$.	
\end{con}

\section{Del Pezzo surfaces}


Let $k$ be an algebraically closed field. A smooth projective connected surface $X$ over $k$ is a \emph{(smooth) del Pezzo surface} if $\omega_{X/k}^{\vee}$ is ample. The degree $d(X)$ of a del Pezzo surface $X$ over $k$ is defined to be the self-intersection $(\omega_{X/k},\omega_{X,k})$ of the canonical line bundle  $\omega_{X/k}$. Note that $1\leq d(X) \leq 9$. The automorphism group of a smooth del Pezzo surface $X$ over $k$ is finite if and only if $d(X) \leq 5$. Recall that, if $d(X) \leq 7$, a line on $X$ is defined to be a $(-1)$-curve. In general, a curve $L$ on $X$ is a line if the following holds.
\begin{enumerate}
	\item If $d(X)=9$, then $L\cdot (-K_X) = 3$.
	\item If $X \cong \mathbb P^1_k \times_k \mathbb P^1_k$, then $ L\cdot (-K_X) =2$.
	\item If $d(X)\neq 9$ and $X\not\cong \mathbb P^1_k \times_k \mathbb P^1_k$, then $L\cdot (-K_X) =1$.
\end{enumerate}

Let $S$ be a scheme. Recall that a smooth proper morphism of schemes $X\to S$ is a \emph{(smooth) del Pezzo surface (over $S$)} if its geometric fibres are del Pezzo surfaces. Note that $X\to S$ is a smooth del Pezzo surface if and only if it is a Fano scheme of relative dimension two \cite[\S 2]{JLFano}. If $S$ is a connected scheme, then the degree of a del Pezzo surface $X\to S$ is constant in the fibres (this follows from \cite[Lem.~3.3]{JLFano}). 

A smooth del Pezzo surface over a field $k$ is \emph{split}, or \emph{standard}, if all   lines are defined over $k$ (see  \cite{Scholl}).


Let $X\to S$ be a del Pezzo surface over $S$.  We define $\mathcal L_{X/S} = \mathcal C_{-1,0}$ to be the Hilbert scheme of lines in $X$ \cite[\S 3.4]{Scholl}. Note that $\mathcal L_{X/S}\to S$ is a   morphism of schemes, and that $\mathcal L_{X/S}$ parametrizes the lines (i.e. exceptional curves) on $X$ over $S$.



\begin{lemma}\label{lem:good_model} Let $A$ be a principal ideal domain and let $S=\Spec (A)$.
 Let $ X\to S$ be a smooth del Pezzo surface of degree $d\leq 7$ over $S$.  If $\mathcal L_{X/S}\to S$ is constant (i.e., $\mathcal L_{X/S}$ is a disjoint union of copies of $S$), then there exist $9-d$ points $x_1,\ldots,x_{9-d}$ in $\PP^2(S)$ and an $S$-isomorphism of schemes from $X$ to the blow-up of $\mathbb P^2_S$ in $x_1,\ldots, x_{9-d}$.
\end{lemma}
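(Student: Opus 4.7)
The plan is to realise $X$ as a blow-up of $\PP^2_S$ along $9-d$ disjoint sections by producing pairwise disjoint relative $(-1)$-curves on $X$, contracting them to a smooth proper $S$-scheme with geometric fibres $\PP^2$, and then trivialising this scheme using one further relative line. Since $A$ is a PID, $S = \Spec A$ is connected, so $\mathcal L_{X/S}$ being a disjoint union of copies of $S$ amounts to each connected component being a section, corresponding by the universal property of the Hilbert scheme to a closed subscheme of $X$, $S$-flat, whose geometric fibres are $(-1)$-curves in the respective $X_s$. On a fixed geometric fibre $X_{\bar s}$, which is a split smooth del Pezzo of degree $d \leq 7$, I choose $9-d$ pairwise disjoint $(-1)$-curves, for instance the exceptional divisors in some realisation of $X_{\bar s}$ as an iterated blow-up of $\PP^2$. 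The corresponding components of $\mathcal L_{X/S}$ yield relative lines $L_1, \ldots, L_{9-d} \subset X$. The fibre-wise intersection numbers $L_i \cdot L_j$ are locally constant on the connected base and vanish on $X_{\bar s}$, so they vanish on every fibre, forcing $L_i \cap L_j = \emptyset$ as subschemes of $X$.

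Next, I contract the $L_i$ iteratively via the relative version of Castelnuovo's contraction criterion, applicable here because $X \to S$ is smooth and $S$ is regular. Contracting $L_{9-d}$ produces a smooth proper $S$-morphism $X \to X_1$ realising $X$ as the blow-up of a smooth del Pezzo $X_1$ of degree $d+1$ along a section; fibre-wise ampleness of $\omega_{X_1/S}^{-1}$ combined with properness supplies the del Pezzo structure on $X_1$, and the remaining $L_i$, being disjoint from $L_{9-d}$, map isomorphically to pairwise disjoint relative $(-1)$-curves on $X_1$. Iterating $9-d$ times yields $\pi \colon X \to X'$, where $X' \to S$ is smooth proper with geometric fibres the smooth del Pezzo of degree $9$, namely $\PP^2$, and the $L_i$ contract to pairwise disjoint sections $x_1, \ldots, x_{9-d} \colon S \to X'$.

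Finally, to identify $X' \cong \PP^2_S$, I use an additional relative line. Since $9-d \geq 2$, a geometric fibre $X_{\bar s}$ contains the strict transform of the line through two of the blown-up points as a further $(-1)$-curve, and the corresponding component of $\mathcal L_{X/S}$ produces a relative line $L' \subset X$; its image $\Lambda := \pi(L') \subset X'$ is a relative Cartier divisor, flat over $S$, with fibres hyperplanes in $\PP^2$. By cohomology and base change, $\pi_* \OO_{X'}(\Lambda)$ is locally free of rank $3$, and hence free over the PID $A$, while the evaluation map $\pi^* \pi_* \OO_{X'}(\Lambda) \twoheadrightarrow \OO_{X'}(\Lambda)$ induces a closed immersion $X' \hookrightarrow \PP(\pi_* \OO_{X'}(\Lambda)) \cong \PP^2_S$ that is a fibre-wise isomorphism, hence an isomorphism. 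Under this identification $\pi$ becomes the composition of blow-ups at the pairwise disjoint sections $x_i \in \PP^2(S)$, so $X \cong \Bl_{x_1, \ldots, x_{9-d}} \PP^2_S$ over $S$, as required. The principal technical obstacle will be making the relative Castelnuovo contraction rigorous while verifying at each stage that the target inherits the smooth del Pezzo structure and that the residual lines remain disjoint; this is classical for smooth families of surfaces over regular bases, but deserves careful formulation.
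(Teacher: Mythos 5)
Your proof is correct and, in outline, is what one finds in the reference the paper invokes: the paper's proof of this lemma is the single line ``This follows from \cite[Prop.~3.7]{Scholl},'' and Scholl's argument is precisely the iterated relative Castelnuovo contraction of disjoint relative $(-1)$-curves down to a Brauer--Severi scheme of relative dimension two, followed by trivialising it. Two small remarks. First, there is a notational slip in the last paragraph: you write $\pi_*\OO_{X'}(\Lambda)$ and $\pi^*\pi_*$, but $\pi$ is the contraction $X\to X'$; you mean $f_*\OO_{X'}(\Lambda)$ for the structure morphism $f\colon X'\to S$. Second, and more substantively, your final step can be streamlined: once you have the relative line $\Lambda\subset X'$ with $\Lambda^2=1$ fibrewise, the sheaf $f_*\OO_{X'}(\Lambda)$ is locally free of rank $3$ with formation commuting with base change (by $H^i(\PP^2,\OO(1))=0$ for $i>0$), and over the PID $A$ it is free; but one can just as well observe that the evaluation map gives an $S$-morphism $X'\to\PP^2_S$ that is fibrewise an isomorphism between smooth proper $S$-schemes, hence an isomorphism, without invoking a closed immersion first. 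You correctly identify the relative contraction as the technical crux; for a complete write-up you would want to cite a reference such as Scholl's own Propositions 3.5--3.7 or the relative minimal model literature for families of smooth surfaces, and verify (as you indicate) that each contraction lands back in the smooth del Pezzo category with a constant Hilbert scheme of lines so the induction proceeds.
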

\begin{proof}
This follows from \cite[Prop.~3.7]{Scholl}.
\end{proof}

Let $S$ be a Dedekind scheme (i.e., an integral noetherian normal one-dimensional scheme) with function field $K$. Let $X$ be a smooth del Pezzo surface over $K$. We say that $X$ \emph{has good reduction over $S$} if there exist a smooth del Pezzo surface $\mathcal X\to S$ over $S$ and an isomorphism $\mathcal X_K \cong X$ over $K$.  


\section{The unit equation}
Let $K$ be a number field.  For $a$ in $K$, let   $h(a)$ be the usual absolute  logarithmic Weil height of $a$,  as defined in \cite[1.6.1]{BombieriGubler}. Write $d_K = [K:\QQ]$  for the degree of $K$ over $\QQ$. Define  $D_K$ to be the absolute value of the discriminant of $K$ over $\QQ$. 

 Let $S$ be a finite set of finite places   of $K$.    Let $\mathcal O_{K,S}$ be the ring of $S$-integers in $K$.  Write  $N_S = \prod_{v\in S} N_v$ for the norm of $S$, where $N_v$  denotes the number of elements in the residue field of $v$. Also,  we let $h_S =\lvert \mathrm{Pic}(\mathcal{O}_{K,S})\rvert$ be  the class number of $\mathcal{O}_{K,S}$.

\begin{lemma}[H.W. Lenstra jr.] \label{lem:lenstra}
Let $K$ be a number field. Then 
\[ \lvert  \mathrm{Pic}(\OO_K)\rvert \leq (d_K+D_K)^{d_K} \]
\end{lemma}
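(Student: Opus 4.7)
Plan:

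The approach is to combine Minkowski's classical bound on ideal class representatives with a direct count of integral ideals of bounded norm. Write $n := d_K$ and $D := D_K$.

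The first step is Minkowski's geometry-of-numbers bound: every ideal class of $\mathcal{O}_K$ contains an integral ideal $\mathfrak{a}$ with
$$N(\mathfrak{a}) \,\leq\, M_K \,:=\, \frac{n!}{n^n}\left(\frac{4}{\pi}\right)^{r_2}\sqrt{D},$$
where $r_2$ is the number of complex places of $K$. A short calculation using $n!/n^n \leq 1$ together with $r_2 \leq n/2$ (and handling $n=1$ separately, where $D=1$ and the claim is trivial) shows $M_K \leq \sqrt{D}$, so it suffices to bound the number of integral ideals of $\mathcal{O}_K$ of norm at most $\sqrt{D}$.

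The second step is to bound, for each positive integer $m$, the number of integral ideals of norm exactly $m$. Any such ideal divides $m\mathcal{O}_K$, so this count is at most the number of ideal divisors of $m\mathcal{O}_K$. Writing $m\mathcal{O}_K = \prod_{\mathfrak{p}} \mathfrak{p}^{b_{\mathfrak{p}}}$ with $b_{\mathfrak{p}} = e_{\mathfrak{p}} v_p(m)$ for $\mathfrak{p} \mid p$, and using the elementary inequalities $b+1 \leq 2^{b}$ and $\sum_{\mathfrak{p} \mid p} e_{\mathfrak{p}} \leq n$, we obtain
$$\prod_{\mathfrak{p}}(b_{\mathfrak{p}}+1) \,\leq\, 2^{\sum_{\mathfrak{p}} b_{\mathfrak{p}}} \,\leq\, 2^{n \log_2 m} \,=\, m^n.$$

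Combining the two steps yields $\lvert \mathrm{Pic}(\mathcal{O}_K)\rvert \leq \sum_{m=1}^{\lfloor\sqrt{D}\rfloor} m^n \leq D^{(n+1)/2}$. Since $(n+1)/2 \leq n$ for $n \geq 1$ and $D \leq n + D$, we deduce $D^{(n+1)/2} \leq D^n \leq (n+D)^n$, which gives the claim. The only substantive step is the divisor bound $\#\{\mathfrak{a} : N(\mathfrak{a}) = m\} \leq m^n$; the Minkowski input and the concluding arithmetic inequality are routine.
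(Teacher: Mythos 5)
Your proof is correct, and it takes a genuinely different route from the one in the paper. The paper simply invokes Lenstra's refined class number bound (Theorem 6.5 of the cited reference), namely
\[
\lvert \Pic(\OO_K)\rvert \leq D_K^{1/2}\,\frac{\bigl(d_K-1+\tfrac12\log D_K\bigr)^{d_K-1}}{(d_K-1)!},
\]
and then massages that expression into $(d_K+D_K)^{d_K}$ by a couple of elementary estimates. You instead derive the bound from first principles: Minkowski's classical statement that every ideal class has a representative of norm at most $\sqrt{D_K}$, combined with the crude divisor-counting estimate $\#\{\mathfrak a : N(\mathfrak a)=m\}\leq m^{d_K}$ and a telescoping sum. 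Both arguments are valid; the paper's is shorter because it outsources the hard work, while yours is self-contained and uses only Minkowski's bound. Your route also makes transparent why the final bound is so far from sharp (Lenstra's inequality is much tighter, and your $\sum_{m\leq\sqrt D}m^n$ step is lossy by design). Two small presentation notes: the claim $M_K\leq\sqrt{D_K}$ does not follow from $n!/n^n\leq1$ and $r_2\leq n/2$ taken separately (since $(4/\pi)^{r_2}\geq 1$), so you should actually verify $\frac{n!}{n^n}(4/\pi)^{n/2}\leq 1$ for $n\geq 2$ — it holds, and decreases in $n$, but say so explicitly; and the inequality $\sum_{\mathfrak p}b_{\mathfrak p}\leq n\log_2 m$ should be justified via $\sum_{\mathfrak p\mid p}e_{\mathfrak p}\leq n$ together with $\Omega(m)\leq\log_2 m$. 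With those filled in, the argument is complete.
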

\begin{proof}
This follows from a result of Lenstra \cite[Thm.~6.5]{Lenstra}. Indeed, as $\lvert\mathrm{Pic}(\OO_K)\rvert$ is the class number  of $K$ and $\frac{2}{\pi} < 1$, Lenstra's result implies that 
\[ \lvert \mathrm{Pic}(\OO_K)\rvert \leq D_K^{\frac{1}{2}} \frac{\left(d_K -1+\frac{1}{2}\log(D_K)\right)^{d_K-1}}{(d_K-1)!} \leq D_K^{\frac{1}{2}}\frac{(d_K+D_K)^{d_K}}{(d_K-1)! (d_K+D_K)}.\] This clearly implies that \[ \lvert \mathrm{Pic}(\OO_K) \rvert \leq   \left(d_K + D_K\right)^{d_K}. \qedhere\] 
\end{proof}



The following lemma is a consequence of a result of von K\"anel \cite{Kanel2} which in turn is a direct consequence of Gy\H{o}ry-Yu's theorem \cite{GyoryYu} and builds on the theory of linear forms in logarithms \cite{BakerWustholz1,BakerWustholz2}. 

\begin{lemma}[Gy\H{o}ry-Yu, von K\"anel]\label{lem:units}
	Let $K$ be a number field and let $S$ be a finite set of finite places of $K$. Let $L/K$ be a finite field extension of degree $l$ over $K$ which ramifies only over $S$. Let $S_L$ be the finite set of finite places of $L$ lying over $S$. Then the following statements hold.
	\begin{enumerate}
\item 	There is a finite set of finite places $S'_L$ of $L$ which satisfies the following properties. \begin{itemize}
			\item The finite set $S_L'$ contains $S_L$,
			\item The ring $\mathcal{O}_{L,S_L'}$ is a principal ideal domain.
\item The inequality $N_{S_L'} \leq N_{S_L} D_L^{h_{S_L}}$ holds.
\item The inequality $\lvert S_L'\rvert \leq \lvert S_L \rvert + h_{S_L}$ holds. 
			\end{itemize}
\item Let $S_L'$ be as in $(1)$. If $a$ is an element of $L$ such that $a \in \mathcal{O}_{L,S_L'}^\times$ and $1-a \in \mathcal{O}_{L,S_L'}^\times$, then the inequality 
	\[ h(a) \leq   (12 l d_K N_S D_K)^{20000 l^6  d_K^2 \lvert S\rvert     \left(l d_K + D_K^l N_S^{l} l^{l d_K \lvert S\rvert}\right)^{l d_K}   }  \]
	 holds.
\item We have
		\[d_L = ld_K, \quad D_L \leq D_K^{l} N_S^{l} l^{l d_K \lvert S\rvert}, \quad N_{S_L} \leq N_S^{l}.\]
\end{enumerate}
\end{lemma}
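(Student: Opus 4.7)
The plan is to address parts (3), (1), (2) in that order: part (3) collects standard algebraic number theory for the extension $L/K$; part (1) enlarges $S_L$ to a finite set $S_L'$ of places of $L$ that makes $\mathcal{O}_{L,S_L'}$ a principal ideal domain; and part (2) then applies von K\"anel's effective $S$-unit bound to $(L,S_L')$, with the quantities on its right-hand side estimated via (1) and (3).

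For (3), the equality $d_L = l d_K$ is multiplicativity of degree. The inequality $N_{S_L} \leq N_S^l$ follows from the fundamental identity $\sum_{v'\mid v} e_{v'/v} f_{v'/v} = l$ for each $v \in S$: one has $\prod_{v'\mid v} N_{v'} = \prod_{v'\mid v} N_v^{f_{v'/v}} \leq N_v^l$, and multiplying over $v \in S$ yields the claim. For the discriminant, I would use the tower relation $D_L = D_K^l \cdot N_{K/\QQ}(\mathfrak{d}_{L/K})$; since by hypothesis $L/K$ is unramified outside $S$, the different $\mathfrak{d}_{L/K}$ is supported above $S$, and the standard local bound on its exponent in terms of the ramification indices (each $\leq l$) controls every local contribution. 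Taking $N_{K/\QQ}$ and collecting factors delivers $D_L \leq D_K^l N_S^l l^{l d_K \lvert S\rvert}$.

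For (1), I would pick, for each nontrivial class in $\mathrm{Pic}(\mathcal{O}_{L,S_L})$, a prime ideal representative $\mathfrak{p}_i$ of $L$ lying outside $S_L$; Minkowski's geometry-of-numbers bound guarantees such a choice with $N(\mathfrak{p}_i) \leq \sqrt{D_L}$. Setting $S_L' := S_L \cup \{\mathfrak{p}_i\}_i$ then forces $\mathrm{Pic}(\mathcal{O}_{L,S_L'}) = 0$, so $\mathcal{O}_{L,S_L'}$ is a principal ideal domain. The cardinality bound $\lvert S_L'\rvert \leq \lvert S_L\rvert + h_{S_L}$ is immediate, and
\[ N_{S_L'} \leq N_{S_L} \prod_i N(\mathfrak{p}_i) \leq N_{S_L}\, D_L^{h_{S_L}/2} \leq N_{S_L}\, D_L^{h_{S_L}}. \]

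Finally, for (2), I would apply the explicit $S$-unit estimate of von K\"anel \cite{Kanel2} --- itself built on Gy\H{o}ry-Yu \cite{GyoryYu} and on linear forms in logarithms \cite{BakerWustholz1, BakerWustholz2} --- to the pair $(L,S_L')$. That result gives an upper bound on $h(a)$ (for $a,1-a \in \mathcal{O}_{L,S_L'}^\times$) in terms of $d_L$, $\lvert S_L'\rvert$, $N_{S_L'}$ and $D_L$. Substituting $d_L = l d_K$, $\lvert S_L'\rvert \leq l\lvert S\rvert + h_{S_L}$, $N_{S_L'} \leq N_S^l D_L^{h_{S_L}}$ and $D_L \leq D_K^l N_S^l l^{l d_K \lvert S\rvert}$, and using the Lenstra-type bound established earlier to control $h_{S_L}$ in terms of $d_L$ and $D_L$, yields an expression of the shape stated in the lemma. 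The main obstacle is the purely bookkeeping task of faithfully propagating the explicit constants through these substitutions without loss; the deep content, namely the transcendence theory underlying effective $S$-unit estimates, is imported as a black box from the cited references.
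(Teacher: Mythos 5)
Your overall strategy follows the paper's: establish (3) from standard algebraic number theory, use (3) and Lenstra's class number bound to produce $S_L'$, then feed everything into von K\"anel's effective $S$-unit estimate. The paper proves (2) by applying \cite[Prop.~6.1(ii)]{Kanel2} with $T$ taken to be the set of places of $K$ below $S_L'$ and $U$ the places of $L$ above $T$, yielding a bound in terms of $d_K$, $N_{S'}$, $D_K$ which is then massaged; your phrasing (``in terms of $d_L$, $|S_L'|$, $N_{S_L'}$, $D_L$'') is a mild variant of the same invocation, and you honestly flag that you have not propagated the explicit constants. For (1) the paper simply cites \cite[Lem.~4.1]{Kanel}, so there is no argument to compare against directly.

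However, your proof of (1) contains a genuine error. You claim that ``for each nontrivial class in $\mathrm{Pic}(\mathcal{O}_{L,S_L})$ [one can pick] a prime ideal representative $\mathfrak{p}_i$ of $L$ lying outside $S_L$; Minkowski's geometry-of-numbers bound guarantees such a choice with $N(\mathfrak{p}_i)\le\sqrt{D_L}$.'' This is false: Minkowski's theorem produces, in every ideal class, an \emph{integral ideal} of norm at most the Minkowski constant $M_L\le\sqrt{D_L}$, but that ideal need not be prime, and its prime factors generally lie in classes different from the one you started with. Producing a \emph{prime} in a prescribed class with a norm bound comparable to $D_L$ is an effective Chebotarev-type statement that is not available unconditionally. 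A correct (and nearly as short) route is: by Minkowski, every ideal class of $\mathcal{O}_L$ is represented by an ideal of norm $\le M_L$, and factoring these shows that the prime ideals of norm $\le M_L$ \emph{generate} $\mathrm{Pic}(\mathcal{O}_L)$, hence also its quotient $\mathrm{Pic}(\mathcal{O}_{L,S_L})$; among those primes not in $S_L$, choose a subset of size at most the minimal number of generators (which is certainly $\le h_{S_L}$) whose images generate $\mathrm{Pic}(\mathcal{O}_{L,S_L})$, and take $S_L'$ to be $S_L$ together with these primes. This kills the Picard group, adds at most $h_{S_L}$ places, and gives $N_{S_L'}\le N_{S_L}\,(\sqrt{D_L})^{h_{S_L}}\le N_{S_L} D_L^{h_{S_L}}$ as required. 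Your ``one prime per class'' formulation should be replaced by this ``small primes generate the class group'' argument.
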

\begin{proof}
Note that $(1)$ follows from \cite[Lem.~4.1]{Kanel}.

Let $S_L'$ be as in $(1)$.
To prove (2) and $(3)$, we  apply \cite[Proposition 6.1.(ii)]{Kanel2} as follows. Let $S'$ be the places of $K$ lying under $S_L'$. Note that $L/K$ only ramifies over $S'$ (as $S\subset S'$). Let $m= \max(6,l) $ and $T=S'$. Also, let $U$ be the finite set of finite places of $L$ lying over $S'$ (so that $S_L'\subset U$). We now apply \emph{loc. cit.} with the above choices of $T$ and $U$ to see that  the inequality
\begin{eqnarray*}
	h(a) & \leq &  \left(2 m  d_K N_{S'}^{\log(m)}	\right)^{15md_K-1} D_K^{m-1} 
	\end{eqnarray*} holds. Note that $m\leq 6l$. In particular,
	\begin{eqnarray*}
	h(a) & \leq &  \left(12 l  d_K N_{S'}^{6l}	\right)^{90 l d_K} D_K^{6l}. 
	\end{eqnarray*} Note that $N_{S_L} \leq N_S^l$. Moreover, by our choice of $S_L'$ and $(1)$, the inequality \[N_{S'} \leq N_{S_L'} \leq  N_{S_L} D_L^{h_{S_L}} \leq N_S^l D_L^{h_{S_L}} \] holds. 
	Now, as $L$ only ramifies over $S$, it follows from Dedekind's discriminant theorem (see \cite[Lem.~6.2]{Kanel2}) that $$D_L \leq  D_K^l N_S^{l} l^{l d_K \lvert S\rvert}.$$     This concludes the proof of $(3)$. 
	By Lenstra's upper bound for the class number (Lemma \ref{lem:lenstra}), the inequality
	$$h_{S_L} \leq \lvert \Pic(\OO_L) \rvert \leq (l d_K + D_L)^{l d_K}$$ holds. We now use the upper bound $D_L \leq  D_K^l N_S^{l} l^{l d_K \lvert S\rvert}$ and obtain  
	$$h_{S_L} \leq (l d_K + D_L)^{l d_K} \leq (l d_K + D_K^l N_S^{l} l^{l d_K \lvert S\rvert})^{l d_K}.$$
Combining our inequalities, we obtain
\begin{eqnarray*}
h(a) &\leq & \left(12 l  d_K N_{S'}^{6l}	\right)^{90 l d_K} D_K^{6l} \\
&\leq & \left(12 l  d_K N_S^{6l^2} D_L^{6 l h_{S_L} } 	\right)^{90 l d_K} D_K^{6l}  \\
& \leq & \left(12 l  d_K N_S^{6l^2} \left(D_K^l N_S^{l} l^{l d_K \lvert S\rvert} \right)^{6 l h_{S_L} } 	\right)^{90 l d_K} D_K^{6l} \\
& \leq & \left(12 l  d_K N_S^{6l^2} \left(D_K^l N_S^{l} l^{l d_K \lvert S\rvert} \right)^{6 l \left((l d_K + D_K^l N_S^{l} l^{l d_K \lvert S\rvert})^{l d_K} \right) } 	\right)^{90 l d_K} D_K^{6l} \\
& \leq & (12 l d_K N_S D_K)^{6 l^2 \cdot l d_K \lvert S\rvert \cdot 6 l \left(l d_K + D_K^l N_S^{l} l^{l d_K \lvert S\rvert}\right)^{l d_K}  \cdot 90 l d_K \cdot 6l} \\
& \leq &  (12 l d_K N_S D_K)^{20000 l^6  d_K^2 \lvert S\rvert     \left(l d_K + D_K^l N_S^{l} l^{l d_K \lvert S\rvert}\right)^{l d_K}   }  .
\end{eqnarray*} This concludes the proof of $(2)$.  
\end{proof}

\section{An  effective Shafarevich theorem  for del Pezzo surfaces} 
We now prove an effective version of Scholl's finiteness theorem for   del Pezzo surfaces of degree at most four. As in Scholl's paper \cite{Scholl}, we reduce to a finiteness statement about the unit equation.

The unit equation appears via a consideration of the coordinates of certain points on $\mathbb P^2$ which are in general position. Here by general position, we
mean the following. Let $1\leq d\leq 9$ and let $P_1, \ldots, P_{9-d}$ be points on $\mathbb P^2_k$, where $k$ is a field. We say that these points are (geometrically) in general position if no three lie on a line in $\mathbb P^2_{\overline{k}}$, no six lie on a conic in $\mathbb P^2_{\overline{k}}$, and no eight lie on a cubic in $\mathbb P^2_{\overline{k}}$
which is singular at one of the points.

\begin{theorem}\label{thm:DP}
	Let $K$ be a number field, let $S$ be a finite set of finite places of $K$, and let $1\leq d\leq 4$. Let $X$ be a   smooth del Pezzo surface of degree $  d $ over $K$ with good reduction outside $S$. There exist a finite field extension $L/K$ of degree at most $240!$ which ramifies only over $S$,  and there exist $5-d$ points 
	$$P_5 = (a_5:b_5:1),\ldots,P_{9-d} = (a_{9-d}:b_{9-d}:1)$$ in $\mathbb P^2(L)$  
	 such that  the following statements hold.
	\begin{enumerate}
	\item The  surface $X_L$ is the blow-up of $\mathbb P^2_L$ in \[(0:0:1), (0:1:0), (1:0:0), (1:1:1), P_5, \ldots, P_{9-d},  \] and these points are in general position.
	\item Define $l_d$ to be the order of the Weyl group of the root system of $E_{9-d}$. Then the inequality
		\[  \max_{i=5, \ldots, 9-d}\left(h(a_i),  h(b_i)\right) \leq (12 l_d d_K N_S D_K)^{20000 l_d^6  d_K^2 \lvert S\rvert     \left(l_d d_K + D_K^{l_d} N_S^{l_d} l_d^{l_d d_K \lvert S\rvert}\right)^{l_d d_K}   }  \] holds.  
		\item  Write $S_L$ for the set of finite places of $L$ lying over $S$. Then 
		\[d_L \leq l_dd_K, \quad D_L \leq D_K^{l_d} N_S^{l_d} l_d^{l_d d_K \lvert S\rvert}, \quad N_{S_L} \leq N_S^{l_d}.\]
	\end{enumerate}
\end{theorem}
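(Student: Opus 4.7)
The idea is to first reduce to the case where $X$ is split over a finite, controlled field extension $L/K$, and then combine Scholl's blow-up description (Lemma \ref{lem:good_model}) with the effective unit equation (Lemma \ref{lem:units}) to bound the coordinates of the blown-up points.

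First, I would spread $X$ out to a smooth del Pezzo surface $\mathcal{X} \to \Spec \mathcal{O}_{K,S}$ and form the relative Hilbert scheme of lines $\mathcal{L}_{\mathcal{X}/\mathcal{O}_{K,S}}$. This is finite \'etale over $\mathcal{O}_{K,S}$, its geometric fibres being the finite set of lines on a smooth del Pezzo surface, of cardinality at most $240$ (attained for $d=1$). Taking the splitting field $L$ of $\mathcal{L}_{X/K} \to \Spec K$, the \'etaleness of the spreading out implies that $L/K$ is unramified outside $S$, while the Galois group $\Gal(L/K)$ embeds into the symmetric group on at most $240$ letters, whence $[L:K] \le 240!$.

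Having constructed $L$, I would enlarge $S_L$ to $S_L'$ using Lemma \ref{lem:units}(1) so that $\mathcal{O}_{L,S_L'}$ is a principal ideal domain, and then apply Lemma \ref{lem:good_model} to the spreading out of $X_L$ over $\mathcal{O}_{L,S_L'}$: since $\mathcal{L}_{X/L}$ is constant by construction of $L$, there exist $9-d$ points $Q_1, \ldots, Q_{9-d}$ in $\mathbb P^2(\mathcal{O}_{L,S_L'})$ whose blow-up is a model of $X_L$. The hypothesis $d \le 4$ gives $9-d \ge 5 \ge 4$, so I may use an element of $\PGL_3(\mathcal{O}_{L,S_L'})$ to move the first four points to the standard frame $(0:0:1), (0:1:0), (1:0:0), (1:1:1)$; renaming the remaining points as $P_i = (a_i : b_i : 1)$ with $5 \le i \le 9-d$ yields part (1) of the theorem.

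For the height bound in (2), I would observe that the general-position requirement on the chosen $9-d$ points forces, for each $i \in \{5, \ldots, 9-d\}$, both $a_i (1-a_i)$ and $b_i (1-b_i)$ to lie in $\mathcal{O}_{L,S_L'}^{\times}$: indeed, if any of these vanished modulo a prime of $\mathcal{O}_{L,S_L'}$, then $P_i$ would become collinear in that fibre with two of the coordinate frame points, contradicting general position of the $9-d$ points and hence good reduction of the blow-up. A sharper count shows that $\Gal(L/K)$ in fact embeds into the Weyl group of the root system $E_{9-d}$ via its action on lines, so I may take $l = [L:K] \le l_d$. Applying Lemma \ref{lem:units}(2) separately to $a_i$ and to $b_i$ then yields the displayed height bound, and part (3) is immediate from Lemma \ref{lem:units}(3). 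The main obstacles I expect to work through are (a) verifying that $\mathcal{L}_{\mathcal{X}/\mathcal{O}_{K,S}}$ is finite \'etale, so that the splitting extension $L/K$ is genuinely unramified over $S$ and has the claimed bound on its degree, and (b) carrying out the $\PGL_3$ normalization integrally over $\mathcal{O}_{L,S_L'}$; the latter is possible because of the PID property and because general position in every fibre forces the relevant $3 \times 3$ determinants to be units.
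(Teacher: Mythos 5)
Your proposal follows the same route as the paper's proof: split the lines of $X$ over a controlled extension $L$ (unramified outside $S$ because the Hilbert scheme of lines on a good model is finite \'etale, with $\Gal(L/K)$ embedding in the Weyl group of $E_{9-d}$ so that $[L:K]\leq l_d$), enlarge $S_L$ to $S_L'$ via Lemma \ref{lem:units}(1) to make $\mathcal{O}_{L,S_L'}$ a PID, apply Lemma \ref{lem:good_model} and a $\PGL_3$-normalization to exhibit $X_L$ as a blow-up of $\mathbb{P}^2$ in $(0{:}0{:}1),(0{:}1{:}0),(1{:}0{:}0),(1{:}1{:}1),P_5,\dots,P_{9-d}$, and finally use general position in every fibre to force $a_i,1-a_i,b_i,1-b_i$ to be $S_L'$-units, feeding into Lemma \ref{lem:units}(2). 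This is essentially identical in structure and detail to the argument in the paper.
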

 \begin{proof} Let $L$ be the smallest number field such that all lines of $X$ are defined over $L$. Note that the number field $L$ is of degree at most $l_d$ over $K$ and ramifies only over $S$. (This follows from Scholl's \cite[Prop.~3.6]{Scholl} and standard facts about lines on smooth del Pezzo surfaces \cite[\S 8.2]{Dol12}.)
 Let $S_L$ be the finite set of finite places of $L$ lying over $S$.
 
By the first part of Lemma \ref{lem:units}, there exists a finite set $S_L'$ of finite places of $L$ containing $S_L$ such that the following statements hold. 
\begin{itemize}
\item The ring $\mathcal{O}_{L,S_L'}$ is a principal ideal domain.
\item The inequality $N_{S_L'} \leq N_{S_L} D_L^{h_{S_L}}$ holds.
\item The inequality $\lvert S_L'\rvert \leq \lvert S_L \rvert + h_{S_L}$ holds.
\end{itemize}

To prove the theorem, we now follow the proof of \cite[Prop.~4.2]{Scholl}. 
Since $X$ has good reduction outside $S$, we see that $X_L$ has good reduction outside $S_L$ (and thus $S_L'$). Let $\mathcal X\to \Spec \mathcal O_{L,S_L'}$ be a smooth del Pezzo surface such that $\mathcal X_L$ is isomorphic to $X_L$. Since $\OO_{L,S_L'}$ is a principal ideal domain and $\mathcal L_{\mathcal X/\OO_{L,S_L'}}$ is constant over $\Spec \mathcal{O}_{L,S_L'}$, it follows from Lemma \ref{lem:good_model} that  there are points $P_5, \ldots, P_{9-d}$ in $\mathbb P^2(\OO_{L,S_L'})$ such that   $\mathcal{X}$ is   isomorphic to the blow-up of $\mathbb P^2_{\OO_{L,S_L'}}$ in  the $\mathcal{O}_{L,S_L'}$-points \[(0:0:1), (0:1:0), (1:0:0), (1:1:1), P_5, \ldots, P_{9-d}.\]  In particular, as $\mathcal X$ is smooth over $\OO_{K,S_L'}$, the points \[(0:0:1), (0:1:0), (1:0:0), (1:1:1), P_5, \ldots, P_{9-d},  \] are in general position.
 For $i=5,\ldots, 9-d$, write $P_i = (\alpha_i:\beta_i:\gamma_i)$ with $\alpha_i,\beta_i,\gamma_i \in \mathcal{O}_{L,S_L'}$. Now, as no three of any collection of four of these points   are collinear, we see that $\alpha_i \beta_i \gamma_i \in \mathcal {O}_{L,S_L'}^\times$. For $i=5,\ldots, 9-d$, we define $a_i = \frac{\alpha_i}{\gamma_i}$, and $b_i = \frac{\beta_i}{\gamma_i}$. Now,  \[(0:0:1), (0:1:0), (1:0:0), (1:1:1), P_5, \ldots, P_{9-d},  \] are in general position and $\mathcal{X}$ is   isomorphic to the blow-up of $\mathbb P^2_{\OO_{L,S_L'}}$ in  these $\mathcal{O}_{L,S_L'}$-points. Again, as no three of any collection of four of these points are collinear, it follows that $(1- a_i)(1-b_i) \in \mathcal {O}_{L,S_L'}^\times$. Therefore, for all $i=5,\ldots, 9-d$, the algebraic numbers $a_i$ and $b_i$ are solutions to the $S_L'$-unit equation in $L$. In particular, by the second part of Lemma \ref{lem:units}, the inequality 
 \begin{eqnarray*}
  \max_{i=5, \ldots, 9-d}\left(h(a_i),  h(b_i)\right)
 & \leq & (12 l_d d_K N_S D_K)^{20000 l_d^6  d_K^2 \lvert S\rvert     \left(l_d d_K + D_K^{l_d} N_S^{l_d} l_d^{l_d d_K \lvert S\rvert}\right)^{l_d d_K}   }
  \end{eqnarray*}  holds. This concludes the proof of $(1)$ and $(2)$. Note that $(3)$ follows from the third part of Lemma \ref{lem:units}.
\end{proof}

\begin{remark}
	We note that, with notation as in $(2)$ of Theorem \ref{thm:DP}, 
	\[l_1 = 696729600, \quad l_2 = 2903040 , \quad l_3 = 51840, \quad l_4 = 1920; \] see \cite[Cor.~8.2.16]{Dol12}.
\end{remark}


\begin{proof}[Proof of Theorem \ref{thm:DPintro}]
	Let $K_{\mathrm{split}}$ be the compositum of all number fields $L/K$ such that $L$ is ramified only over $S$ and of degree at most $240!$. Then, by Theorem \ref{thm:DP}, any smooth del Pezzo surface over $K$ of degree $1\leq d\leq 4$ with good reduction outside $S$ is split  over the number field $K_{\mathrm{split}}$. It follows from Theorem \ref{thm:DP} that the set of $K_{\mathrm{split}}$-isomorphism classes of smooth del Pezzo surfaces over $K$ of degree $1\leq d\leq 4$ with good reduction outside $S$ is finite and effectively computable. (Indeed, the heights of the coefficients of the coordinates of the points we require to blow-up in $\mathbb P^2({K_{\mathrm{split}}})$ are bounded explicitly.) As the automorphism group of a smooth del Pezzo surface of degree at most four is finite (and effectively computable using methods as in \cite{Blanc2, Blanc4}), a standard Galois cohomological argument now concludes the proof (see part (a) of the proof of \cite[Thm.~4.5]{Scholl}).  
\end{proof}

\section{An effective Shafarevich   theorem  for smooth  quartic curves}
Let $\mathcal H_3$ be the stack of hyperelliptic curves in $\mathcal M_3$. The morphism $\mathcal H_3\to \mathcal M_3$ is a closed immersion and  the coarse moduli space of $\mathcal H_3$ is affine.

Let $\mathcal M_3^{\textrm{nh}}$ be the complement of $\mathcal H_3$ in $\mathcal M_3$. Note that $\mathcal M_3^{\textrm{nh}}$ parametrizes smooth proper non-hyperelliptic curves of genus $3$, and that $\mathcal M_3^{\textrm{nh}}$ is an open substack of $\mathcal M_3$.

Let $\mathrm{Hilb}$ be the Hilbert scheme of smooth quartic curves in $\PP^2$, i.e., $\mathrm{Hilb}$ is the affine scheme over $\ZZ$ given by the complement of the discriminant divisor in $\mathbb P(\mathrm{H}^0(\mathbb P^2_{\mathbb Z}, \mathcal O_{\mathbb P^2_{\mathbb Z}}(4)))$. Note that $\mathrm{PGL}_{3,\ZZ}$ acts on $\mathrm{Hilb}$.
Let $\mathcal C_{(4;1)}:=[\mathrm{PGL}_{3,\ZZ}\backslash \mathrm{Hilb}]$ be the stack of smooth quartic curves in $\PP^2$  (see \cite{Ben13, JL}).  Note that the natural morphism from $\mathcal C_{(4;1)}$ to the  complement $\mathcal M_3^{\textrm{nh}}$ of $\mathcal H_3$ in $\mathcal M_3$ is an isomorphism of stacks over $\ZZ$. (The only subtle point here is that the automorphism group of a smooth quartic curve in $\mathbb P^2$ equals its ``linear''  automorphism group as a hypersurface; see \cite{Chang, Poo05}.)

If $S$ is a scheme, then  an $S$-object of $\mathcal C_{(4;1)}$ is not necessarily isomorphic to a smooth quartic hypersurface in $\mathbb P^2_S$. There are obstructions (coming from the Brauer group of $S$ and $\mathrm{H}^1_{\et}(S, \mathrm{GL}_{3,S})$ \cite[\S 2.1.2]{JL}) to an $S$-object of $\mathcal C_{(4;1)}$ being a smooth quartic curve in $\mathbb P^2_S$ (and not only in a non-trivial Brauer-Severi scheme or projective bundle over $S$). On the other hand, if $k$ is a field, then any $k$-object of $\mathcal C_{(4;1)}(k)$ is in fact a smooth quartic curve in $\mathbb P^2_k$ (cf. \cite{Lorenzotwists}). 


   We now prove  an effective version of Faltings's theorem (\emph{quondam} Shafarevich's conjecture)  for smooth quartic curves (i.e., non-hyperelliptic smooth proper genus $3$ curves).

\begin{theorem}\label{thm:quartics}
	Let $K$ be a number field and let $S$ be a finite set of finite places of $K$. The essential image of the functor $\mathcal C_{(4;1)}(\mathcal{O}_{K,S})\to \mathcal C_{(4;1)}(K)$  is finite and effectively computable.
\end{theorem}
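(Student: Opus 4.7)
The plan is to reduce Theorem~\ref{thm:quartics} to the effective Shafarevich theorem for smooth del Pezzo surfaces of degree two (Theorem~\ref{thm:DPintro}) via the double covering trick flagged in the introduction. Recall that to a smooth quartic $C\subset \mathbb P^2_k$ over a field $k$ one associates the double cover $X_C\to \mathbb P^2_k$ branched along $C$, which is a smooth del Pezzo surface of degree two; conversely, every smooth degree two del Pezzo surface $X/k$ is canonically realised in this way via its anticanonical morphism $X\to \mathbb P(\mathrm H^0(X,\omega_X^{-1})^\vee)\simeq \mathbb P^2_k$, whose branch divisor is a smooth quartic curve. This assignment $C\mapsto X_C$ upgrades to an equivalence between the stack $\mathcal C_{(4;1)}$ and the stack of smooth degree two del Pezzo surfaces.

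Given an object $C\in\mathcal C_{(4;1)}(K)$ in the essential image of $\mathcal C_{(4;1)}(\mathcal O_{K,S})$, I would apply this equivalence at the integral level to produce a smooth del Pezzo surface $\mathcal X\to \Spec \mathcal O_{K,S}$ of degree two whose generic fibre corresponds to $C$. In particular, $X_C:=\mathcal X_K$ is a smooth degree two del Pezzo surface over $K$ with good reduction outside $S$. Theorem~\ref{thm:DPintro} then immediately implies that the set of such $X_C$'s is finite and effectively computable. Moreover, Theorem~\ref{thm:DP} provides each $X_C$ explicitly as the blow-up of $\mathbb P^2_L$ at seven points of effectively bounded height over an explicit finite extension $L/K$; from this presentation I would compute a basis of $\mathrm H^0(X_C,\omega_{X_C}^{-1})$, write down the anticanonical morphism $X_C\to \mathbb P^2_K$, and extract its branch divisor as a smooth quartic in $\mathbb P^2_K$. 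Since each $X_C$ recovers at most one quartic $C$, the essential image is finite and effectively computable.

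The hardest part will be to establish the equivalence between $\mathcal C_{(4;1)}$ and the stack of degree two del Pezzo surfaces \emph{in families}, so that the integral model of an $\mathcal O_{K,S}$-point of $\mathcal C_{(4;1)}$ really produces an integral del Pezzo model $\mathcal X$. The subtlety, as flagged in the paragraph preceding the theorem, is that an $\mathcal O_{K,S}$-object of $\mathcal C_{(4;1)}$ need not embed into $\mathbb P^2_{\mathcal O_{K,S}}$ because of Brauer-group and $\mathrm{GL}_3$-torsor obstructions; but the double cover construction can still be carried out over the associated Brauer--Severi surface above $\mathcal O_{K,S}$, and the resulting degree two del Pezzo is intrinsic and smooth over $\mathcal O_{K,S}$ independently of the Brauer--Severi twist. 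Once this stacky equivalence is in place, the reconstruction of $C$ from $X_C$ via the anticanonical morphism is a routine consequence of Theorem~\ref{thm:DP}, and the finiteness and effectivity of the essential image follow at once.
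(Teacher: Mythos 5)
Your overall reduction strategy — pass from a quartic to the degree-two del Pezzo surface via the double cover, invoke Theorem~\ref{thm:DPintro}, and recover the quartic as the branch locus of the anticanonical map — is exactly the route the paper takes. However, there are two genuine gaps in the way you carry it out.

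First, the claim that $C\mapsto X_C$ upgrades to an \emph{equivalence} between $\mathcal C_{(4;1)}$ and the stack $\mathcal{DP}_2$ of degree-two del Pezzo surfaces is false, and the paper explicitly remarks on this: over $\CC$ a smooth degree-two del Pezzo surface always has a nontrivial automorphism (the Geiser involution), whereas the generic plane quartic is automorphism-free, so the two stacks cannot be isomorphic. The correct statement is that $\mathcal{DP}_{2,\ZZ[1/2]}\to\mathcal C_{(4;1),\ZZ[1/2]}$ is a $\mu_2$-gerbe. This is not necessarily fatal to your argument (one direction of the correspondence still produces a del Pezzo from a quartic, and that is the direction you need), but your last paragraph leans on the equivalence ``in families'' to conclude smoothness, and that appeal does not go through as stated.

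Second, and more seriously, you assert that the del Pezzo surface produced from an $\mathcal O_{K,S}$-object of $\mathcal C_{(4;1)}$ is ``smooth over $\mathcal O_{K,S}$,'' hence has good reduction outside $S$. This fails at places of residue characteristic $2$: a double cover of $\mathbb P^2$ branched along a smooth quartic is not smooth in characteristic $2$, so the construction does not yield a del Pezzo surface with good reduction over all of $\OO_{K,S}$. The paper resolves this — and simultaneously disposes of the Brauer--Severi/$\GL_3$-torsor obstruction you flag — by effectively enlarging $S$ to a set $S'$ containing all places over $2$, over which every rank-three vector bundle trivializes and $\Pic(\OO_{K,S'})=0$. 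This upgrades the $\OO_{K,S}$-object to an honest quartic $\mathcal Y\subset\mathbb P^2_{\OO_{K,S'}}$ cut out by a polynomial $f$, for which the smoothness of the double cover $x_3^2=f$ can be verified directly by the Jacobian criterion (precisely because $2\in\OO_{K,S'}^\times$), and then one applies Theorem~\ref{thm:DPintro} over $S'$ rather than $S$. Without the enlargement step your proof does not produce a del Pezzo surface to which Theorem~\ref{thm:DPintro} applies.

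Finally, your last sentence (``Since each $X_C$ recovers at most one quartic $C$'') correctly gives injectivity on $\overline K$-isomorphism classes, but to conclude finiteness of $K$-isomorphism classes one still needs the standard Galois cohomology step (finiteness of $H^1$ in the finite automorphism group scheme), which the paper invokes explicitly and you omit.
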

\begin{proof}	 
	The set of isomorphism classes of rank three vector bundles on $\Spec \OO_{K,S}$ is finite and effectively computable, as it is given by $\mathrm{H}^1_{\et}(\OO_{K,S},\mathrm{GL}_{3,\OO_{K,S}})$. (The latter cohomology set can be computed explicitly as follows.  Firstly,  Borel's finiteness theorem \cite[Thm.~5.1]{Borel2} is effective. Therefore, the set $c(\OO_{K,S}, \mathrm{GL}_{3,\OO_{K,S}})$ (with notation as in \cite[\S 5]{GilleMoretBailly}) is finite and effectively computable. Finally, there is a natural bijection $c(\OO_{K,S}, \mathrm{GL}_{3,\OO_{K,S}})  \cong \mathrm{H}^1_{\et}(\OO_{K,S}, \mathrm{GL}_{3,\OO_{K,S}})$.)

	 Let $S'$ be a finite set of finite places of $K$ with the following properties.
	\begin{enumerate}
		\item All elements of $\mathrm{H}^1_{\et}(\OO_{K,S},\mathrm{GL}_{3,\OO_{K,S}})$  are trivial over $\Spec \OO_{K,S'}$,
		\item the set $S'$ contains all places lying over $2$, and
		\item $\mathrm{Pic}(\OO_{K,S'})  =0$. 
	\end{enumerate} Note that, by Lemma \ref{lem:units} and the fact that $\mathrm{H}^1_{\et}(\OO_{K,S},\mathrm{GL}_{3,\OO_{K,S}})$ is effectively computable,   we can indeed effectively determine such a finite set  $S'$.
	
	Note that the set of $K$-isomorphism classes of smooth del Pezzo surfaces of degree two over $K$ with good reduction outside $S'$ is finite and effectively computable (Theorem \ref{thm:DPintro}).  Let $D_1, \ldots, D_n$ be smooth del Pezzo surfaces of degree two over $K$ such that any smooth del Pezzo surface of degree two over $K$ with good reduction outside $S'$ is isomorphic to some $D_i$ with $i$ in $\{1,\ldots,n\}$. 
	
	Note that, all objects of $\mathcal C_{(4;1)}(K)$ are smooth quartic curves in $\mathbb P^2_K$, 
	all vector bundles of rank three over $\OO_{K,S}$ trivialize over $\OO_{K,S'}$, and the Picard group of $\Spec \OO_{K,S'}$ is trivial. Therefore,   if $Y$ is in the essential image of the functor $\mathcal C_{(4;1)}(\OO_{K,S})\to \mathcal C_{(4;1)}(K)$, then   there is a smooth quartic $\mathcal Y$ in $\mathbb P^2_{\OO_{K,S'}} $ whose generic fibre $\mathcal Y_K$ is $K$-isomorphic to $Y$ (use \cite[Lem.~4.8.(2)]{JL} and the explicit description of the functor of points of $\mathcal C_{(4;1)}$ given in \cite[\S 2.3.2]{BenoistThesis}).
	
	  Let $f \in \OO_{K,S'}[x_0,x_1,x_2]_4$ be a homogeneous polynomial such that $\mathcal Y$ is isomorphic to the zero locus  of $f$ in $\mathbb P^2_{\OO_{K,S'}}$. Since a double cover $\mathcal D$ of $\mathbb P^2_{\OO_{K,S'}}$ ramified along $\mathcal Y$ can   be written as the zero locus of $x_3^2 = f$ in a suitable weighted projective space, it follows from the Jacobian criterion for smoothness that $\mathcal D$ is smooth over $\OO_{K,S'}$ (here we use that $S'$ contains all the places lying over $2$). Therefore,  a double cover $ \mathcal  D$ of $\mathbb P^2_{\OO_{K,S'}}$ ramified precisely along $\mathcal Y$ is a smooth del Pezzo surface of degree two over $\OO_{K,S'}$. Note that the isomorphism class of $\mathcal D_{\overline{K}}$ determines the isomorphism class of $\mathcal Y_{\overline{K}}$.
	 
	  There exists an integer   $i$ in $\{1,\ldots,n\}$ such that $  \mathcal D_K$ is $K$-isomorphic to $D_i$. Therefore, as the automorphism group of a smooth proper genus three curve is finite, up to a standard Galois cohomological argument, we conclude that the set of isomorphism classes of $K$-objects of $\mathcal C_{(4;1)}(K)$ which come from an $\OO_{K,S}$-point of $\mathcal C_{(4;1)}$ is finite and effectively computable.
\end{proof}

 \begin{remark} To explain the idea  behind our proof of Theorem \ref{thm:quartics}, let $\mathcal {DP}_2$ be the stack of smooth del Pezzo surfaces of degree two over $\ZZ$. Note that $\mathcal {DP}_2$ is a Deligne-Mumford separated algebraic stack of finite type over $\ZZ$. The cyclic covering trick exhibits $\mathcal {DP}_{2,\ZZ[1/2]}$ as a (non-neutral) $ \mu_2$-gerbe $\mathcal {DP}_{2,\ZZ[1/2]} \to \mathcal C_{(4;1), \ZZ[1/2]}$ over the stack $\mathcal C_{(4;1),\ZZ[1/2]}$. More precisely, given a smooth del Pezzo surface $X$ over a ring $A$ with $2\in A^\times$, the anti-canonical map exhibits $X$ as a double cover of some (twisted) projective space of relative dimension two over $A$. The branch locus of this double cover is a (twisted) smooth quartic curve over $A$. We refer the reader to \cite{ArsieVistoli} for a further discussion  of the stack of smooth del Pezzo surfaces of degree two as a certain stack of cyclic covers.
 
 It is the structure of $\mathcal {DP}_{2,\ZZ[1/2]}$ as a $\mu_2$-gerbe over $\mathcal C_{(4;1),\ZZ[1/2]}$ that we have exploited in our proof of Theorem \ref{thm:quartics}. 
 
 	It seems worthwile noting that the stacks $\mathcal {DP}_2$ and $\mathcal C_{(4;1)}$ are not isomorphic (not even over $\CC$). Indeed, all smooth del Pezzo surfaces of degree two over $\CC$ have a non-trivial automorphism of order two, whereas the general quartic curve in $\mathbb P^2_{\CC}$ has no non-trivial automorphisms. 
 	
 	On the other hand, the induced morphism on coarse moduli spaces   $\mathcal {DP}_{2,\CC}^{\textrm{coarse}}\to \mathcal C^{\textrm{coarse}}_{(4;1),\CC}$ is an isomorphism of complex algebraic affine varieties   \cite[\S7.2]{Looijenga}.
 \end{remark}

 \begin{corollary}\label{cor:quartics}
 	Let $K$ be a number field and let $S$ be a finite set of finite places of $K$. The set of isomorphism classes of the groupoid $\mathcal C_{(4;1)}(\mathcal{O}_{K,S})$ is finite and effectively computable.
 \end{corollary}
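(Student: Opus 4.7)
The plan is to upgrade Theorem \ref{thm:quartics} from controlling the essential image to controlling all $\mathcal{O}_{K,S}$-isomorphism classes via a Galois cohomological twist argument, parallel to the one used at the end of the proof of Theorem \ref{thm:quartics} itself (but now applied over $\mathcal{O}_{K,S}$ rather than over $K$). By Theorem \ref{thm:quartics}, one has an effectively computable finite list $Y_1,\ldots,Y_m$ of $K$-isomorphism classes representing the essential image of $\mathcal C_{(4;1)}(\mathcal{O}_{K,S})\to \mathcal C_{(4;1)}(K)$; for each $i$, I would effectively produce an $\mathcal{O}_{K,S}$-object $\mathcal{Y}_i\in \mathcal C_{(4;1)}(\mathcal{O}_{K,S})$ whose generic fibre is $K$-isomorphic to $Y_i$. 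It then suffices to bound effectively the fibre of the functor $\mathcal C_{(4;1)}(\mathcal{O}_{K,S})\to \mathcal C_{(4;1)}(K)$ over each class $[Y_i]$.

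The standard torsor construction $\underline{\Isom}_{\mathcal{O}_{K,S}}(-,\mathcal{Y}_i)$ will identify this fibre with a subset of the pointed set $\mathrm{H}^1_{\et}(\mathcal{O}_{K,S},\underline{\Aut}_{\mathcal{O}_{K,S}}(\mathcal{Y}_i))$. Since a smooth quartic in $\mathbb P^2$ has finite (linear) automorphism group (as recalled in the excerpt via \cite{Chang, Poo05}), after effectively enlarging $S$ to contain the residue characteristics dividing $\lvert\Aut(Y_i)\rvert$ for $i=1,\ldots,m$, each $\mathcal{O}_{K,S}$-group scheme $\underline{\Aut}_{\mathcal{O}_{K,S}}(\mathcal{Y}_i)$ is finite \'etale.

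To conclude, I would invoke finiteness and effective computability of $\mathrm{H}^1_{\et}(\mathcal{O}_{K,S},G)$ for an arbitrary finite \'etale $\mathcal{O}_{K,S}$-group scheme $G$. This is already the key cohomological input used in the proof of Theorem \ref{thm:quartics}: the effective version of Borel's finiteness theorem \cite[Thm.~5.1]{Borel2}, combined with the description via $c(\mathcal{O}_{K,S},G)$ of \cite[\S 5]{GilleMoretBailly}, yields the required effective enumeration. Equivalently, finite \'etale $G$-torsors over $\Spec \mathcal{O}_{K,S}$ correspond to continuous homomorphisms $\pi_1^{\et}(\Spec \mathcal{O}_{K,S})\to G(\overline K)$ modulo conjugation, and the finiteness of these is a form of Hermite--Minkowski, made effective by the $S$-unit machinery packaged in Lemma \ref{lem:units}.

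The hardest aspect is organizational rather than technical: one must verify that each step --- the enlargement of $S$ rendering all $\underline{\Aut}_{\mathcal{O}_{K,S}}(\mathcal{Y}_i)$ \'etale, the effective production of integral representatives $\mathcal{Y}_i$, and the explicit enumeration of \'etale $G$-torsors --- is carried out with bounds explicit enough that effectivity propagates cleanly through the twist argument. No new Diophantine input beyond that already deployed in Theorem \ref{thm:quartics} and Lemma \ref{lem:units} is required.
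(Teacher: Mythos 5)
The paper's proof of this corollary is one sentence: since $\mathcal C_{(4;1)}$ is a \emph{separated} Deligne--Mumford stack with finite inertia (being an open substack of $\mathcal M_3$), for any two objects $\mathcal Z,\mathcal Y\in\mathcal C_{(4;1)}(\OO_{K,S})$ the scheme $\underline{\Isom}_{\OO_{K,S}}(\mathcal Z,\mathcal Y)$ is \emph{finite} over $\OO_{K,S}$, so by the valuative criterion of properness every $K$-isomorphism of generic fibres extends uniquely to an $\OO_{K,S}$-isomorphism. Hence the functor $\mathcal C_{(4;1)}(\OO_{K,S})\to\mathcal C_{(4;1)}(K)$ is injective on isomorphism classes, and the corollary follows immediately from Theorem~\ref{thm:quartics}. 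Your proposal misses this key observation, and this matters: once you see it, the entire Galois cohomological apparatus you build becomes vacuous because each fibre of the functor over an isomorphism class $[Y_i]$ is at most a singleton.

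Beyond being overcomplicated, your route also has a genuine gap at the step where you ``identify this fibre with a subset of $\mathrm{H}^1_{\et}(\OO_{K,S},\underline{\Aut}_{\OO_{K,S}}(\mathcal Y_i))$.'' For $\underline{\Isom}_{\OO_{K,S}}(\mathcal Z,\mathcal Y_i)$ to define a class in that $\mathrm H^1$ it must be an $\underline{\Aut}(\mathcal Y_i)$-torsor, i.e.\ $\mathcal Z$ and $\mathcal Y_i$ must be \'etale-locally isomorphic over $\Spec\OO_{K,S}$. Over a field this is automatic (every form trivializes over $\overline K$), which is why the Galois-cohomological step in the proof of Theorem~\ref{thm:quartics} works over $K$; but over $\OO_{K,S}$ it is not automatic that two integral models with $K$-isomorphic generic fibres are fibrewise isomorphic. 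The only natural way to supply this is precisely the finiteness of the $\Isom$-scheme coming from separatedness, and then the valuative criterion gives you an $\OO_{K,S}$-isomorphism outright --- so there is nothing left for $\mathrm H^1$ to do. A secondary issue: enlarging $S$ so that the $\underline{\Aut}(\mathcal Y_i)$ become \'etale replaces the groupoid $\mathcal C_{(4;1)}(\OO_{K,S})$ by the larger $\mathcal C_{(4;1)}(\OO_{K,S'})$; this is salvageable because $\mathcal C_{(4;1)}(\OO_{K,S})\to\mathcal C_{(4;1)}(\OO_{K,S'})$ is injective on isomorphism classes, but the injectivity again rests on the same separatedness fact you have not invoked. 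In short: you are re-deriving, via a long detour with gaps, a statement that follows in one line from separatedness plus Theorem~\ref{thm:quartics}.
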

 \begin{proof}
 As the functor $\mathcal C_{(4;1)}(\OO_{K,S})\to \mathcal C_{(4;1)}(K)$ is injective on the underlying sets of isomorphism classes, this follows from Theorem \ref{thm:quartics}.
 \end{proof}

\begin{proof}[Proof of Theorem \ref{thm:quartics_intro}] As the set of $\OO_{K,S}$-isomorphism classes of smooth quartic curves over $\OO_{K,S}$ is a subset of the set of isomorphism classes of the groupoid $\mathcal C_{(4;1)}(\OO_{K,S})$, this follows from Corollary \ref{cor:quartics}.
	\end{proof}

\begin{remark}
	Note that the Shafarevich conjecture is effective for hyperelliptic curves \cite{dJRe, JvK, Kanel}. In particular, the finite set of integral points on $\mathcal H_3$ can be computed effectively.
	
	 Nonetheless, we are not able to infer an effective version of the Shafarevich conjecture for all smooth proper genus three curves by combining the results of \emph{loc. cit.} with  Theorems \ref{thm:quartics_intro} and \ref{thm:quartics}.   On the other hand, we are able to reduce the effective Mordell conjecture for some complete curves to a statement about effectively bounding primes of hyperelliptic reduction on a smooth quartic curve over a fixed  $\mathcal{O}_{K,S}$ (see Theorem \ref{thm:eff_mordell2}). 
	 
	 An effective version of the Shafarevich conjecture for all smooth proper genus three curves would imply an effective version of the Mordell conjecture for some class of curves; see Section \ref{section:final}.
\end{remark}


\section{Towards Algorithmic Mordell for some class of curves}\label{section:final}

We give   an application of our main result on  computing integral points on the stack $\mathcal C_{(4;1)} = \mathcal M_3^{\textrm{nh}}$ (Theorem \ref{thm:quartics}) to  computing rational points on certain complete hyperbolic curves.

Our aim is to provide a criterion for a complete curve to satisfy a version of the Mordell conjecture in which one can also algorithmically determine the set of rational points; see Corollary \ref{cor:eff_mordell} for a precise statement. Let us start with stating this conjecture.

\begin{conjecture}[Algorithmic Mordell]\label{conj}
	There exists an algorithm that, on input given a number field $K$, a smooth projective geometrically connected curve $X$ over $K$ of genus at least two, and a number field $L$ over $K$, computes as output the finite set $X(L)$. 
\end{conjecture}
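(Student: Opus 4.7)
The approach I would take is the classical Parshin--Kodaira reduction of the Mordell conjecture to the Shafarevich conjecture, rendered effective. Given a number field $K$, a smooth projective geometrically connected curve $X$ over $K$ of genus $g\geq 2$, and a number field $L/K$, the first step is to effectively compute a finite set $S$ of finite places of $L$ containing all places of bad reduction of $X_L$ and all places above $2$; this is feasible since one can explicitly present $X$ by equations, compute its discriminant, and take $S$ to be the places dividing this discriminant together with finitely many auxiliary primes required by the construction in the next step.

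The second step is to associate, to each rational point $P \in X(L)$, a Parshin cover: an \'etale cover $Y_P \to X_L$ of bounded genus $g'=g'(g)$ whose isomorphism class determines $P$. Concretely, one pulls back the multiplication-by-$2$ isogeny on the Jacobian $\Jac(X_L)$ along the Abel--Jacobi embedding $Q \mapsto [Q-P]$; the resulting $2^{2g}$-sheeted \'etale cover $Y_P \to X_L$ has genus $g' = 2^{2g}(g-1)+1$ and has good reduction outside a finite set $S'$ of places of $L$ that can be effectively determined from $S$ (since the isogeny and the embedding are explicit, and the discriminant of $Y_P \to X_L$ is controlled by $S$). Moreover, by an argument of Szpiro--Parshin, distinct points $P$ yield non-isomorphic covers $Y_P$, and from any $Y_P$ one can effectively recover $P$ by realising $Y_P \to X_L$ explicitly and reading off the branch data (which is trivial) together with the geometric data of the embedding.

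The third step is to invoke an effective version of the Shafarevich conjecture in genus $g'$ to enumerate the finite set of isomorphism classes of smooth projective curves $Y$ of genus $g'$ over $L$ with good reduction outside $S'$. For each such $Y$, one searches the Hilbert scheme $\Hom_L(Y, X_L)$ of morphisms from $Y$ to $X_L$ (a quasi-projective $L$-scheme whose equations can be written down effectively) for morphisms that have the correct degree and ramification profile to arise from the Parshin construction, and from each such morphism one extracts the candidate point $P$. Finally one tests whether each candidate $P$ lies in $X(L)$, which is a trivial effective check.

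The main obstacle, and the reason this remains a conjecture, is the input to the third step: an effective Shafarevich theorem in genus $g'$. At present such a theorem is known only for $g'=1$ via Baker's method on linear forms in logarithms (Coates, Shafarevich), for hyperelliptic curves in all genera via reduction to $S$-unit equations (von K\"anel, de~Jong--R\'emond), and, by the present paper, for smooth non-hyperelliptic plane quartics (a sublocus of $\mathcal M_3$). Since the Parshin cover of a genus $g$ curve has genus growing like $2^{2g}(g-1)+1$, even the case $g=2$ of Conjecture \ref{conj} already requires effective Shafarevich in genus $17$, which is far beyond what any currently available technique can deliver. The conditional effective Mordell statements obtained in this section (Corollary \ref{cor:eff_mordell}) are precisely those in which the Parshin cover happens to land inside the $\mathcal C_{(4;1)}$--locus to which the main theorem of the paper applies, modulo an unresolved question about controlling hyperelliptic reduction.
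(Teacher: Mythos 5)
You were asked to ``prove'' a statement that the paper explicitly labels a \emph{conjecture}, and which the paper does not prove. The paper's own discussion surrounding Conjecture~\ref{conj} merely observes that the Kodaira--Parshin construction (a finite \'etale cover of a given hyperbolic curve mapping finitely to $\mathcal M_{g}$ for some $g\geq 3$, as in Martin-Deschamps and Szpiro) reduces Algorithmic Mordell to an effective Shafarevich theorem in genus $g$, and then proves only the conditional results of Theorem~\ref{thm:eff_mordell2} and Corollary~\ref{cor:eff_mordell}. Your response correctly recognizes that no proof exists: you describe the Parshin reduction, identify effective Shafarevich in high genus as the bottleneck, and note that the paper's actual contribution is the conditional statement when the map lands in the plane quartic locus of $\mathcal M_3$. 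That assessment is accurate and matches the paper's framing.

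One cautionary technical point about the particular Parshin variant you sketch: the \'etale cover obtained by pulling back $[2]\colon\Jac(X_L)\to\Jac(X_L)$ along $Q\mapsto [Q-P]$ depends on $P$ only up to translation of the embedding, so for distinct $P,P'$ the covers are $L$-twists of one another that become isomorphic over $\overline{L}$ (one is obtained from the other by translating the cover by any $a$ with $2a=[P'-P]$). Distinguishing them over $L$ and recovering $P$ effectively therefore requires more care than ``reading off the branch data'' (which is trivial for an \'etale cover); the classical Parshin argument instead uses a ramified cyclic cover, and Faltings' argument passes through period maps and isogeny estimates. This does not affect your main point, which is that the obstruction is an effective Shafarevich theorem in genus $g'\gg 3$, well beyond the genus~$3$ non-hyperelliptic locus treated in this paper.
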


To make this conjecture mathematically precise, let us note that by ``algorithm'' we mean Turing machine (as defined in \cite{HU}). We refer the reader to \cite{PoonenRat} for a useful discussion.

The finiteness of the set $X(L)$ follows from Faltings's theorem (\emph{quondam} Mordell's conjecture) \cite{Faltings2, Szpiroa}.  
Note that before Faltings's theorem, there was not a single example known of a number field $K$ and a smooth proper geometrically connected curve of genus at least two over $K$ such that, for all number fields $L$ over $K$, the set $X(L)$ was provably finite. 

Conjecture \ref{conj} is clearly different from \textit{Mordell effectif} as stated in \cite[\S 4]{Moret-Bailly3}. Indeed, Conjecture \ref{conj}  implies  that the height of an $L$-rational point of $X$ is bounded by some effectively computable real number depending only on $X$, $K$, and $L$, but it does not infer any linear or polynomial dependence on   the discriminant of $L$ nor the height of $X$.

Faltings's proof of the Mordell conjecture exploits the fact that, for all  smooth projective curves $Y$ over $\CC$ of genus at least two, there exist a finite \'etale cover $X\to Y$, an integer $g\geq 3$, and a finite morphism $X\to \mathcal M_{g,\CC}$; see \cite{MartinDeschamps, Szpiroa}. An effective version of the Shafarevich conjecture for \emph{all} smooth proper genus $g$ curves would therefore imply Conjecture \ref{conj}; see \cite{Remo} for a precise statement.   

Levin has shown that effective Shafarevich theorems for ``special'' classes of curves have applications to effectively computing  integral points on affine curves \cite{Levin1}. The main result of this section (see Theorem \ref{thm:eff_mordell2} and Corollary \ref{cor:eff_mordell} below) aims at showing a similar result for rational points on complete curves $X$ mapping non-trivially to $\mathcal M_3$.


In \cite{Zaal} Zaal has explicitly constructed  complete curves $X$ which map finitely to $\mathcal M_3$. 
The aim of this section is to prove Algorithmic Mordell for such complete curves $X$, under suitable assumptions.

\begin{theorem}\label{thm:eff_mordell2} Let $K$ be a number field and let $S$ be a finite set of finite places of $K$. Let $X$ be a complete curve over $\mathcal{O}_{K,S}$ which maps finitely to $\mathcal M_{3,\mathcal{O}_{K,S}}$.
	
	Suppose that there is an effectively computable  finite set of finite places $S'$ of $K$ containing $S$ such that all smooth quartic curves in the image of $X(K)$ in $\mathcal M_3(K)$   have no hyperelliptic reduction over $\mathcal{O}_{K,S'}$.
	
	Then the set $X(K)$ is finite and effectively computable. 
\end{theorem}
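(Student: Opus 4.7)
The plan is as follows. Since $X$ is proper over $\mathcal{O}_{K,S}$, the valuative criterion of properness identifies $X(K)$ with $X(\mathcal{O}_{K,S})$, so every $K$-rational point $x$ of $X$ produces via the finite morphism $f\colon X\to \mathcal{M}_{3,\mathcal{O}_{K,S}}$ an $\mathcal{O}_{K,S}$-point $f(x)\in \mathcal{M}_3(\mathcal{O}_{K,S})$, i.e., a smooth proper genus three curve $C_x$ over $\mathcal{O}_{K,S}$. I will first bound the set of possible images $f(x)$ up to $K$-isomorphism by an effectively computable finite set, and then pull this bound back along $f$.

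To bound the images, partition $X(K)$ according to whether the generic fibre of $C_x$ is hyperelliptic. Since $\mathcal H_3\subset \mathcal M_3$ is a closed substack and $\Spec\mathcal{O}_{K,S}$ is integral, if the generic fibre is hyperelliptic then the entire $\mathcal{O}_{K,S}$-point factors through $\mathcal H_3$, and effective Shafarevich for hyperelliptic curves of genus three, via \cite{dJRe, JvK, Kanel}, furnishes a finite effectively computable list of the possible $K$-isomorphism classes of such $C_x$. If instead the generic fibre is non-hyperelliptic, then it is a smooth quartic curve in $\mathbb P^2_K$; the hypothesis on $S'$ says that $C_x$ has non-hyperelliptic reduction at every place outside $S'$, and since $\mathcal M_3^{\textrm{nh}}=\mathcal C_{(4;1)}$ is an open substack of $\mathcal M_3$, the pullback of $C_x$ to $\Spec\mathcal{O}_{K,S'}$ defines an $\mathcal{O}_{K,S'}$-point of $\mathcal C_{(4;1)}$. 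Theorem~\ref{thm:quartics} applied to $K$ and $S'$ then bounds the essential image of $\mathcal C_{(4;1)}(\mathcal{O}_{K,S'})\to \mathcal C_{(4;1)}(K)\subset \mathcal M_3(K)$ by a finite effectively computable set.

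Combining the two cases produces an effectively computable finite list $y_1,\ldots,y_N\in \mathcal M_3(K)$ through which every image $f(x)$ factors. Since $f$ is finite and representable (as $X$ is a scheme), each fibre $f^{-1}(y_i)$ is a finite closed subscheme of $X_K$, and computing its $K$-rational points is effective, reducing to factoring polynomials over $K$ once $X$, $f$, and the $y_i$ are explicitly presented. Taking the union of these finite sets over $i=1,\ldots,N$ yields $X(K)$, completing the argument.

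The main subtlety is purely bookkeeping rather than Diophantine: one must verify that ``non-hyperelliptic reduction outside $S'$'' genuinely upgrades the $\mathcal{O}_{K,S'}$-valued point of $\mathcal M_3$ to an $\mathcal{O}_{K,S'}$-valued point of the open substack $\mathcal C_{(4;1)}$ (immediate from openness of $\mathcal M_3^{\textrm{nh}}\hookrightarrow \mathcal M_3$), and that the passage from isomorphism classes in $\mathcal M_3(K)$ back to $X(K)$ is algorithmically effective (standard for a representable finite morphism from a scheme of finite type). No Diophantine input beyond Theorem~\ref{thm:quartics} and effective hyperelliptic Shafarevich is required; the substantive geometric work is already subsumed in the hypothesis producing $S'$.
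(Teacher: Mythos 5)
Your proof is correct and structurally parallel to the paper's: both partition the image of $X(K)$ in $\mathcal M_3(K)$ into the hyperelliptic and non-hyperelliptic loci, handle the latter via Theorem~\ref{thm:quartics} applied over $\mathcal{O}_{K,S'}$, and then finish by computing fibres of the finite morphism $X_K\to\mathcal M_{3,K}$ over an explicit finite set of $K$-points. The one genuine difference is your treatment of the hyperelliptic case: you invoke effective Shafarevich for hyperelliptic curves of genus three (after arguing, correctly, that if the generic fibre of $C_x$ is hyperelliptic then the whole $\mathcal{O}_{K,S}$-point factors through the closed substack $\mathcal H_3$), which brings in an additional Diophantine black box. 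The paper instead argues geometrically: since $X_K$ is proper and the coarse moduli space of $\mathcal H_{3,K}$ is affine, the fibre product $\mathcal X'_K\times_{\mathcal M_{3,K}}\mathcal H_{3,K}$ (with $\mathcal X'_K$ the image of $X_K$) is a finite $K$-scheme, and since the morphism $p$ and the hyperelliptic locus are given explicitly, this finite scheme can be computed directly. The paper's route is thus more elementary and self-contained — it uses no Diophantine input beyond the unit-equation bounds already baked into Theorem~\ref{thm:quartics} — whereas yours relies additionally on the hyperelliptic effective Shafarevich results of \cite{dJRe,JvK,Kanel}. Both are valid; the paper's version is also tighter in the sense that its list $H_X$ consists only of hyperelliptic points actually lying in the image of $X_K$, whereas your list consists of all hyperelliptic genus-three curves with good reduction outside $S$, most of whose fibres in $X$ will simply turn out empty.
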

\begin{proof} 
	By assumption, we are given explicitly a finite  morphism $p:X\to \mathcal M_{3,\mathcal{O}_{K,S}}$, where $\mathcal M_3$ is the stack of smooth proper genus three curves. 
	
	Let $\mathcal X'$ be the image of $p$ in $\mathcal M_{3,\mathcal{O}_{K,S}}$. Note that $ \mathcal X'$ is closed in $\mathcal M_{3,\mathcal{O}_{K,S}}$ and proper over $\mathcal{O}_{K,S}$.
	As $X_K$ is proper and $\mathcal H_{3,K}$ has affine coarse space, the intersection $\mathcal X'_{K}\times_{\mathcal M_{3,K} } \mathcal H_{3,K}$  of $\mathcal X_K'$ and $H_X:=\mathcal H_{3,K}$ in $\mathcal M_{3,K}$ is a  finite scheme over $K$. To prove the theorem, we may and do assume that all closed points of $H_X$ lie in $\mathcal M_{3}(K)$.

	Let $S'$ be as in the statement of the theorem.
	By Theorem \ref{thm:quartics} the essential image of the functor $$\mathcal C_{(4;1)}(\mathcal{O}_{K,S'}) \to \mathcal C_{(4;1)}(K)$$ is finite and effectively computable. Let $y_1,\ldots, y_n$ in $\mathcal C_{(4;1)}(K)$ be representatives for the essential image of $\mathcal C_{(4;1)}(\mathcal{O}_{K,S'}) \to \mathcal C_{(4;1)}(K).$ 
	
	By the defining property of $S'$,   the image of  $X(K) \to \mathcal M_3(K)$ is contained  in the union of $H_X$ with the image of $\mathcal C_{(4;1)}(\mathcal{O}_{K,S'}) \to \mathcal C_{(4;1)}(K)\to \mathcal M_{3}(K)$. In particular, the  image of $X(K)$ in $\mathcal M_3(K)$ lies in the effectively computable finite set of $K$-rational points $H_X \cup \{y_1,\ldots,y_n\}$ of $\mathcal M_{3}(K)$. 
	As $X_K\to \mathcal M_{3,K}$ is a finite morphism and the image of $X(K)$ is finite, we can now conclude that $X(K)$ is finite.


	Let $y$ be  in $H_X\cup \{y_1,\ldots,y_n\} \subset \mathcal M_3(K)$. Note that the fibre of $X\to \mathcal M_{3,K}$ over $y$ with respect to $X_K\to \mathcal M_{3,K}$ is either empty or a zero-dimensional subscheme of $X$. The theory of Gr\"obner bases allows one to effectively compute whether the fibre over $y$ is empty. (Here we use that the morphism $p$ is given explicitly, so that one can effectively compute equations for the closed subscheme $p^{-1}(y_i)$.) In particular,  to conclude the proof, we may and do assume that the fibre over $y$ is non-empty. Now, to conclude the computation of $X(K)$, it suffices to show that the set of points on a non-empty zero-dimensional finite scheme over $\Qbar$ can be effectively computed. This can be done using elimination theory and factoring of polynomials over number fields \cite{L, LLL}. 
\end{proof}

\begin{remark}
	Let us discuss the assumption in Theorem \ref{thm:eff_mordell2}. To do so, let $K$ be a number field, let $S$ be a finite set of finite places of $K$, and let $X\to \Spec \OO_{K,S}$ be a smooth proper curve of genus $3$.  If $X_K$ is not a hyperelliptic curve, then the set of primes $\mathfrak p\subset \OO_{K,S}$ such that the fibre of $X$ over $\mathfrak{p}$ is hyperelliptic is finite. Moreover, if $X$ is fixed, then the set of ``hyperelliptic reductions'' of $X$ is   effectively computable, as it is given by the intersection product of the hyperelliptic locus in $\mathcal M_3$ with the $\OO_{K,S}$-section of $\mathcal M_3$ corresponding to $X$ (pulled-back to $\OO_{K,S}$). Now, the hypothesis in Theorem \ref{thm:eff_mordell2} says  that, for all number fields $K$ and all finite sets of finite places $S$ of $K$, we can effectively compute a finite set of finite places $S'$ of $K$ such that \textit{all} smooth proper genus $3$ curves $X$ over $\OO_{K,S}$ with $X_K$ not a hyperelliptic curve have non-hyperelliptic reduction at all $\mathfrak{p}\not\in S'$. 
	
	Note that  the Shafarevich conjecture (as proven by Faltings) implies that the set $S'$ exists (but might not be effectively computable). However, an effective version of the Shafarevich conjecture for genus three curves would imply that this assumption holds. On the other hand, the assumption made in Theorem \ref{thm:eff_mordell2} is a priori weaker than an effective version of the Shafarevich conjecture.
\end{remark}

\begin{corollary}\label{cor:eff_mordell}    Let $X$ be a proper curve over a number field $K$ and let $X\to \mathcal M_{3,K}$ be a finite morphism.
	
	Suppose  there is an algorithm that, on input a number field $L$ over $K$, computes as output  a finite set of finite places $S$ of $L$ such that all smooth quartic curves $C$ in the image of $X(L) \to \mathcal M_3(L)$  lie in the essential image of $\mathcal M_3^{\textrm{nh}}(\OO_{L,S})\to \mathcal M_3(L)$. 
	
	Then Algorithmic Mordell holds for $X$, i.e.,  there is an algorithm that, on input a number field $L$, computes as output the finite set $X(L)$.
\end{corollary}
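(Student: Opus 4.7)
The plan is to reduce directly to Theorem \ref{thm:eff_mordell2}: given an input number field $L$ over $K$, spread out the given data to a proper model of $X_L$ over a ring of $S$-integers of $L$ with a finite morphism to $\mathcal M_3$, and then invoke Theorem \ref{thm:eff_mordell2} with the set $S$ produced by the hypothesis.

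Concretely, from the explicit equations defining $X$ and the morphism $X \to \mathcal M_{3,K}$, I would effectively choose a finite set of finite places $T$ of $L$ together with a proper flat model $\mathcal X \to \Spec\OO_{L,T}$ of $X_L$ such that the given morphism $X_L \to \mathcal M_{3,L}$ extends to a finite morphism $\mathcal X \to \mathcal M_{3,\OO_{L,T}}$. The constructibility of flatness, properness and finiteness makes this step effective: one simply enlarges $T$ to include the finitely many places where any of these properties could fail. By the valuative criterion of properness, $X(L) = \mathcal X(\OO_{L,T})$.

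Next, run the algorithm provided by the hypothesis on input $L$ to obtain a finite set of finite places $S_0$ of $L$ such that every smooth quartic in the image of $X(L) \to \mathcal M_3(L)$ lies in the essential image of $\mathcal M_3^{\textrm{nh}}(\OO_{L,S_0}) \to \mathcal M_3(L)$; enlarge $S_0$ to contain $T$. Base-changing gives a complete curve $\mathcal X_{S_0} := \mathcal X \times_{\OO_{L,T}}\OO_{L,S_0}$ over $\OO_{L,S_0}$ with a finite morphism $\mathcal X_{S_0} \to \mathcal M_{3,\OO_{L,S_0}}$. Now apply Theorem \ref{thm:eff_mordell2} with its ``$K$'' taken to be $L$, its ``$S$'' taken to be $S_0$, and its auxiliary set ``$S'$'' taken to be $S_0$ itself: the hypothesis about non-hyperelliptic reduction is satisfied by construction, since lying in the essential image of $\mathcal M_3^{\textrm{nh}}(\OO_{L,S_0})$ means exactly admitting a smooth proper model over $\OO_{L,S_0}$ all of whose geometric fibres are non-hyperelliptic. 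Theorem \ref{thm:eff_mordell2} then returns $X(L)$ as a finite, effectively computable set.

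The only real point of friction is verifying the equivalence between the two phrasings of the hyperelliptic-reduction condition appearing in the corollary and in Theorem \ref{thm:eff_mordell2}. This is essentially the formal observation that $\mathcal M_3^{\textrm{nh}}$ is the open complement of the closed substack $\mathcal H_3$ of hyperelliptic curves in $\mathcal M_3$, so that an $\OO_{L,S_0}$-point of $\mathcal M_3^{\textrm{nh}}$ is the same data as a smooth proper genus-three curve over $\OO_{L,S_0}$ no geometric fibre of which is hyperelliptic, precisely the condition of ``no hyperelliptic reduction outside $S_0$.'' Once this compatibility is noted, the corollary reduces to a formal concatenation of spreading out, the effective algorithm supplied by the hypothesis, and Theorem \ref{thm:eff_mordell2}, with all the genuine content absorbed into the latter.
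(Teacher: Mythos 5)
Your proposal is correct and takes essentially the same route as the paper, whose proof of this corollary is literally the single sentence ``This follows directly from Theorem \ref{thm:eff_mordell2}.'' You have supplied exactly the bookkeeping the paper leaves to the reader: spreading $X$ and the finite morphism $X\to\mathcal M_{3,K}$ out to a complete curve over $\OO_{L,T}$ with a finite morphism to $\mathcal M_{3,\OO_{L,T}}$, enlarging the set produced by the hypothesized algorithm to contain $T$, and noting that an object of $\mathcal M_3(L)$ lies in the essential image of $\mathcal M_3^{\textrm{nh}}(\OO_{L,S_0})$ precisely when it admits a smooth proper model over $\OO_{L,S_0}$ all of whose geometric fibres are non-hyperelliptic (i.e.\ ``no hyperelliptic reduction over $\OO_{L,S_0}$'' in the sense Theorem \ref{thm:eff_mordell2} uses, which the paper itself makes explicit in the parenthetical to the proof of Corollary \ref{cor}).
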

\begin{proof}
This follows directly from Theorem \ref{thm:eff_mordell2}.
\end{proof}


\begin{remark}
	Note that the Bogomolov-Miyaoka-Yau inequality implies that a smooth proper genus two curve over $\CC$ does not map finitely to $\mathcal M_{3,\CC}$; see \cite{Kotschick}. It seems reasonable to suspect that there are complete hyperbolic  complex algebraic curves  $X$ which do not map finitely to $\mathcal M_{3,\CC}$, even after passing to a finite \'etale cover.    On the other hand, any complete hyperbolic curve maps finitely, up to a finite \'etale cover, to  $\mathcal M_{g,\CC}$  for some $g\geq 3$; see \cite{MartinDeschamps}.
\end{remark}


\begin{remark} 
	We emphasize that our proof of Theorem \ref{thm:eff_mordell2} gives a non-efficient algorithm for several reasons. For instance, part of the algorithm consists of writing down all solutions to the unit equation in some number ring. Moreover, one has to work with number fields of high degree.
\end{remark}

\begin{corollary}\label{cor}   Let $X\to Y$ be a finite \'etale morphism of smooth proper geometrically connected curves over a number field $K$. Let $X\to \mathcal M_{3,K}$ be a finite morphism.

Suppose  there is an algorithm that, on input a number field $L$ over $K$,  computes as output  a finite set of finite places $S$ of $L$ such that all smooth quartic curves $C$ in the image of $X(L) \to \mathcal M_3(L)$   have non-singular non-hyperelliptic reduction over $\OO_{L,S}$. 

Then there is an algorithm that, on input a number field $L$, computes the finite set $Y(L)$.
\end{corollary}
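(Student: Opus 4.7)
The plan is to reduce the computation of $Y(L)$ to the computation of $X$-rational points over a larger, effectively determined number field $L^\sharp$, and then to invoke Corollary~\ref{cor:eff_mordell} for $X$.

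First, I will apply the effective Chevalley-Weil theorem to the finite \'etale morphism $\pi\colon X \to Y$ of smooth proper geometrically connected curves over $K$. This yields an effectively computable finite set of finite places $T$ of $K$ such that, for any number field $L/K$ and any $y \in Y(L)$, each preimage $x \in X(\overline{L})$ of $y$ generates an extension $L(x)/L$ of degree at most $\deg(\pi)$ that is unramified outside the places of $L$ lying above $T$. By the effective version of Hermite-Minkowski, only finitely many such extensions $L'/L$ exist, and they can be explicitly enumerated. Let $L^\sharp$ denote their compositum; then $L^\sharp/L$ is an effectively computable finite extension, and $Y(L) \subseteq \pi(X(L^\sharp))$.

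Second, I will feed $L^\sharp$ into the algorithm guaranteed by the hypothesis of the corollary (with $L^\sharp$ in the role of ``$L$''). This produces an effectively computable finite set of finite places $S$ of $L^\sharp$ such that every smooth quartic curve in the image of $X(L^\sharp) \to \mathcal{M}_3(L^\sharp)$ lies in the essential image of $\mathcal{M}_3^{\textrm{nh}}(\mathcal{O}_{L^\sharp, S}) \to \mathcal{M}_3(L^\sharp)$. Corollary~\ref{cor:eff_mordell}, applied to $X$ over the base field $L^\sharp$, then yields an algorithm producing the finite set $X(L^\sharp)$.

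Finally, since $\pi$ is given by explicit equations, I will compute $\pi(X(L^\sharp)) \subseteq Y(L^\sharp)$ and select those elements whose coordinates lie in $L$; this is a decidable test, and the resulting set is precisely $Y(L)$. The main obstacle will be the Chevalley-Weil step: producing $T$ from explicit equations for $\pi$, and enumerating the intermediate fields $L' \subseteq L^\sharp$, is classical but delicate, and the resulting degree $[L^\sharp : L]$ is typically enormous, making the overall procedure far from practical.
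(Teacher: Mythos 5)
Your proposal is correct and takes essentially the same approach as the paper's (very terse) proof, which simply invokes Corollary~\ref{cor:eff_mordell} together with the quantitative Chevalley--Weil theorem of Bilu and adds a parenthetical remark that the hypothesis of Corollary~\ref{cor} (``non-singular non-hyperelliptic reduction over $\OO_{L,S}$'') is equivalent to the hypothesis of Corollary~\ref{cor:eff_mordell} (``lies in the essential image of $\mathcal M_3^{\textrm{nh}}(\OO_{L,S})\to \mathcal M_3(L)$''). Your write-up is a faithful unpacking of how those two ingredients combine; the only small imprecision is the phrase ``applied to $X$ over the base field $L^\sharp$''---you do not need to change the base of Corollary~\ref{cor:eff_mordell}, since its conclusion already provides an algorithm computing $X(L')$ for \emph{every} number field $L'$ over $K$, and in particular for $L' = L^\sharp$.
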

\begin{proof}
	This follows   from Corollary \ref{cor:eff_mordell} and the quantitative version of the Chevalley-Weil theorem for curves \cite{BiluCW}. (Note that a smooth quartic curve over $L$ has non-singular non-hyperelliptic reduction over $\OO_{L,S}$ if and only if it lies in the essential image of $\mathcal M_3^{\textrm{nh}}(\OO_{L,S})\to \mathcal M_3(L).$ In other words,   a smooth quartic curve $X$ over $K$ has non-singular non-hyperelliptic reduction  over $\OO_{L,S}$ if, and only if, its minimal regular proper model $\mathcal X\to \Spec \OO_{L,S}$ (as defined in \cite[Defn.~9.3.12]{Liu2}) is smooth over $\OO_{L,S}$ and, for all $b$ in $\Spec \OO_{L,S}$, the fibre $\mathcal X_b$ is a non-hyperelliptic curve.)
	\end{proof}


\bibliography{refsci}{}
\bibliographystyle{plain}

\end{document}